\newtheorem{theorem}[subsection]{Theorem}
\newtheorem{lemma}[subsection]{Lemma}
\newtheorem{proposition}[subsection]{Proposition}
\newcommand\reallywidehat[1]{%
\savestack{\tmpbox}{\stretchto{%
  \scaleto{%
    \scalerel*[\widthof{\ensuremath{#1}}]{\kern-.6pt\bigwedge\kern-.6pt}%
    {\rule[-\textheight/2]{1ex}{\textheight}}
  }{\textheight}%
}{0.5ex}}%
\stackon[1pt]{#1}{\tmpbox}%
}
\newcommand{\Hmm}[1]{\leavevmode{\marginpar{\tiny%
$\hbox to 0mm{\hspace*{-0.5mm}$\leftarrow$\hss}%
\vcenter{\vrule depth 0.1mm height 0.1mm width \the\marginparwidth}%
\hbox to 0mm{\hss$\rightarrow$\hspace*{-0.5mm}}$\\\relax\raggedright #1}}}
\numberwithin{equation}{section}
\begin{document}

\title[Global Dynamics of the Biharmonic NLS]{GLOBAL WELL-POSEDNESS FOR THE BIHARMONIC QUINTIC NONLINEAR SCHRÖDINGER EQUATION ON $\mathbb{R}^2$}

\author[Ba\c{s}ako\u{g}lu, G\"{u}rel, Yılmaz] {Engin Ba\c{s}ako\u{g}lu, T. Burak G\"{u}rel, O\u{g}uz Yılmaz}

\address{Department of Mathematics,
Bo\u gazi\c ci University, 
Bebek 34342, Istanbul, Turkey}
\email{engin.basakoglu@boun.edu.tr}
\address{Department of Mathematics,
Bo\u gazi\c ci University, 
Bebek 34342, Istanbul, Turkey}
\email{bgurel@boun.edu.tr}
\address{Department of Mathematics, Bo\u gazi\c ci University, 
Bebek 34342, Istanbul, Turkey}
\email{oguz.yilmaz@boun.edu.tr}
\subjclass[2020]{35A01, 35G25, 35Q55}
\keywords{Biharmonic Scrödinger equation, local well-posedness, global well-posedness, almost conservation law, Fourier restriction spaces}

\begin{abstract}
We prove that the Cauchy problem for the 2D quintic defocusing biharmonic Schrödinger equation is globally well-posed in the Sobolev spaces $H^s(\mathbb{R}^2)$ for $\frac{8}{7}<s<2$. Our main ingredient to establish the result is the $I$-method of Colliander-Keel-Staffilani-Takaoka-Tao \cite{colliander2002almost} which is used to construct the modified energy functional that is almost conserved in time.  

\end{abstract}

\maketitle
\section{Introduction}
In this paper, we consider the initial value problem (IVP) for the quintic defocusing biharmonic Schrödinger equation
\begin{equation}\label{eq:4NLS}
    \begin{cases}
    &i\partial_{t}u-\triangle^{2}u=\vert u\vert^{4}u,\quad (t,x)\in\mathbb{R}\times\mathbb{R}^2,\\
    &u(0,x)=u_{0}(x)\in H^{s}(\mathbb{R}^{2}),
    \end{cases}
\end{equation} 
where $u$ is a complex-valued space-time function. The fourth-order nonlinear Schrödinger equations (4-NLS)
\begin{equation*}\label{eq:karpman_4NLS}
    i\partial_{t}\psi+\frac{1}{2}\triangle\psi+\frac{\gamma}{2}\triangle^{2}\psi+f(\vert\psi\vert^{2})\psi=0,\quad f(u)=u^{p},\,p\geq 1,\,\gamma\in\mathbb{R}
\end{equation*}
were introduced by Karpman and Shagalov, \cite{karpman1996stabilization,karpman2000stability}, to take into account the effect of small fourth-order dispersion terms in the propagation of intense laser beams in a bulk medium with Kerr nonlinearity. 
The local solutions to \eqref{eq:4NLS} obey mass and energy conservation laws. More precisely, 
\begin{equation*}
    M(u(t))=\Vert u(t)\Vert_{L^{2}(\mathbb{R}^{2})}=M(u_{0})
\end{equation*}
and
\begin{equation}\label{energyfunc}
    E(u(t))=\frac{1}{2}\int_{\mathbb{R}^{2}}\vert\triangle u(t,x)\vert^{2}+\frac{1}{3}\vert u(t,x)\vert^{6}\,\text{d}x=E(u_{0}).
\end{equation}
If $u(t,x)$ is a solution of \eqref{eq:4NLS} with initial data $u_{0}$, then for $\lambda>0$,
\begin{equation}\label{eq:scalesymm}
    u_{\lambda}(t,x):=\lambda^{-1}u\left(\frac{t}{\lambda^{4}},\frac{x}{\lambda}\right)
\end{equation}
solves the equation \eqref{eq:4NLS} with initial data $u_{0,\lambda}(x)=\lambda^{-1}u_{0}(x)$. Using the scaled solution \eqref{eq:scalesymm}, associated to \eqref{eq:4NLS} posed on $\mathbb{R}^d$, the $\Dot{H^{s}}(\mathbb{R}^{d})$ criticality index $s_c$ can be determined as 
\begin{equation*}
    s_{c}=\frac{d-2}{2}.
\end{equation*}
Therefore, in our case we have 
 $s_c=0$ meaning that the equation \eqref{eq:4NLS} is mass critical. To review the literature, we shall write the general form of 4-NLS 
\begin{equation}\label{eq:G4NLS}
    i\partial_{t}u+\alpha\triangle u+\beta\triangle^{2}u+N(u)=0,
\end{equation}
where $\alpha,\beta\in\mathbb{R}$ and $N(u)$ is the nonlinear term. Davydova and Zaliznyak \cite{davydova2001schrodinger} considered \eqref{eq:G4NLS} for nonlinearities of the form
\begin{equation}\label{eq:cub-quin_nonl}
    N(u)=\gamma \vert u\vert^{2}u+\theta\vert u\vert^{4}u
\end{equation}
with $\gamma\theta<0$ (saturable nonlinearity) and certain restrictions on $\alpha,\beta$. Using analytical and numerical methods, they investigated the spatial-temporal wave packet dynamics in the vicinity of the stationary (soliton) solution. In particular, by means of a variational approach, the prediction of the existence conditions and the stability properties of the chirped and ordinary solitons to the equation \eqref{eq:G4NLS} were given. Considering the physical reality, the cubic-quintic type of nonlinearities \eqref{eq:cub-quin_nonl} are known to give rise to a formation of sufficiently robust optical vortices for 2D cylindrical light beam propagation, \cite{Quiroga-Teixeiro:97}. Also the authors notify in \cite{davydova2001schrodinger} that the existence of the quintic correction term in \eqref{eq:cub-quin_nonl} leads to the most universal approximation for saturable nonlinearities because any model regarding small nonlinearity could be reduced to the quintic model. In \cite{pausader_mass_crit_2010}, Pausader and Shao pointed out that when $\vert\beta\vert$ is sufficiently large with respect to $\vert\alpha\vert$, the waveguides corresponding to the NLS part of \eqref{eq:G4NLS} become stable which in turn yields that the equation \eqref{eq:G4NLS} is predominantly governed by the corresponding biharmonic equation
\begin{equation}\label{eq:G_biharmonic_NLS}
    i\partial_{t}u+\beta\triangle^{2}u+N(u)=0.
\end{equation}
Appropriately changing the time variable, one can take $\beta=\pm 1$. It is remarkable at this point to note that the motivation of taking the biharmonic model with quintic nonlinearity into account comes from the aforementioned explanations and the fact that there is no known result for the global well-posedness of the biharmonic NLS below the energy level in spatial dimension $d=2$ . Fourth-order Schrödinger equations with mixed dispersion terms as in \eqref{eq:G4NLS} will be addressed in a future study. 

As far as the physical point of view is concerned, the biharmonic equation \eqref{eq:G_biharmonic_NLS} is very well studied in deep water wave dynamics \cite{Dysthe}, three dimensional motion of an isolated vortex filament embedded in inviscid incompressible fluid filling in an unbounded region \cite{segata}, and solitary waves \cite{karpman1996stabilization,karpman2000stability}. Moreover, the biharmonic NLS was considered in \cite{Turitsyn1985} as a sample model to study the stability of solitons in magnetic materials when the impact of quasiparticle mass becomes arbitrarily large. Well-posedness of the IVP of \eqref{eq:G4NLS} with various nonlinearities have been widely studied in the literature. Next we review some of these results that are the most relevant to our work. In \cite{Pausader2007GlobalWF}, Pausader established the global well-posedness and discussed the scattering of the solution of the IVP of \eqref{eq:G4NLS} with $\alpha\leq 0$, $\beta=1$ and radially symmetric data $u_{0}\in H^{2}(\mathbb{R}^{d})$, $d\geq 5$, along with the nonlinearities $N(u)=\vert u\vert^{p}u$, $p\in(0,\frac{8}{d-4}]$. In particular, when $p\neq \frac{8}{d-4}$, the radial symmetry assumption on the data could be removed and the global well-posedness was shown to hold still true in $H^{2}(\mathbb{R}^{d})$ without radial assumption on the data. Later, Miao, Xu and Zhao \cite{miao2011global} improved the result in \cite{Pausader2007GlobalWF} for a particular case in which $\alpha=0$, $\beta=1$ and $d\geq 9$ with the nonlinearity $N(u)=\vert u\vert^{\frac{8}{d-4}}u$. Wang \cite{wang_2012} obtained the local and global well-posedness of the IVP for the equation \eqref{eq:G_biharmonic_NLS} on $\mathbb{R}\times\mathbb{R}$ with $\beta=1$ and $N(u)=\partial_{x}(\vert u\vert^{2k}u)$, $k\geq 2$, for small initial data $u_{0}\in \Dot{H}^{s_{k}}(\mathbb{R})$, $s_{k}=\frac{k-3}{2k}$. The IVP for the cubic biharmonic NLS \eqref{eq:G_biharmonic_NLS} posed on $\mathbb{R}\times\mathbb{R}^{d}$ with $\beta=1$ was discussed by Pausader in \cite{pausader2009cubic}. For $1\leq d\leq 8$ and any initial data in $H^{2}(\mathbb{R}^{d})$, he proved the existence of a unique global solution and the analyticity of data-to-solution map. Moreover, for spatial dimensions $5\leq d\leq 8$, it was shown that any $H^{2}$ solution scatters in time. Also, if $d\geq 9$, the equation was proved to be ill-posed in $H^{2}$ by showing the existence of a Schwartz function $u_{0}$ and a solution $u\in C([0,\varepsilon],H^{2})$ for any $\varepsilon>0$ with data $u_{0}$ satisfying $\Vert u_{0}\Vert_{H^{2}}<\varepsilon$ but $\Vert u(t_{\varepsilon})\Vert_{H^{2}}>\varepsilon^{-1}$, $t_{\varepsilon}\in(0,\varepsilon)$. The global well-posedness of the IVP for \eqref{eq:G4NLS} on $\mathbb{R}\times\mathbb{R}^{d}$ with either $\alpha<0$, $\beta>0$ and $N(u)=\vert u\vert^{2m}u$, $m\in\mathbb{Z}^{+}$ or $\alpha>0$, $\beta<0$ and $N(u)=-\vert u\vert^{2m}u$, $m\in\mathbb{Z}^{+}$, where certain restrictions are employed on $m$ and $d$, was obtained by Guo in \cite{Guo2010}. In particular, using the $I$-method of Colliander-Keel-Staffilani-Takaoka-Tao \cite{colliander2002almost}, the equation \eqref{eq:G4NLS} with restrictions specified above was proved to be globally well-posed in $H^{s}(\mathbb{R}^{d})$ for $$s=1+\frac{md-9+\sqrt{(4m-md+7)^{2}+16}}{4m}$$
where $4<md<4m+2$. The IVP for \eqref{eq:G_biharmonic_NLS} with $\beta=1$ and the cubic nonlinearity on $\mathbb{R}\times\mathbb{R}^{d}$, $5\leq d\leq 7$, was addressed by Miao, Wu and Zhang in \cite{miao_2015} for which the related global well-posedness and scattering results were established in $H^{s}(\mathbb{R}^{d})$ for $s>\frac{16(d-4)}{7d-24}$ if $d=5,6$; also for $s>\frac{45}{23}$ if $d=7$. The idea of their approach is the application of the $I$-method combined with an interaction Morawetz type estimate which is available for $d\geq 5$. In \cite{dinh2017global}, Dinh obtained the global well-posedness for the equation \eqref{eq:G_biharmonic_NLS} with $\beta=1$ and $N(u)=\vert u\vert^{p}u$, $p\in(\frac{8}{d},\frac{8}{d-4})$ below the energy space by using the $I$-method together with the interaction Morawetz inequality. In \cite{seong2021well}, the well-posedness and the ill-posedness for the IVP of \eqref{eq:G_biharmonic_NLS} posed on $\mathbb{R}\times\mathbb{R}$ with $\beta=1$ and $N(u)=\pm\vert u\vert^{2}u$ were studied by Seong. It was obtained that the equation is globally well-posed in $H^{s}(\mathbb{R})$ when $s\geq-\frac{1}{2}$ whereas it is ill-posed for $s<-\frac{1}{2}$ since the corresponding data-to-solution map ceases to be uniformly continuous in this regime. In the case of the initial-boundary value problem (IBVP), the global well-posedness for the biharmonic Schr\"{o}dinger equation \eqref{eq:G_biharmonic_NLS} (with $\beta=1$) with inhomogeneous Dirichlet-Neumann boundary data  has recently been established in the energy space $H^2(\mathbb{R}^+)$ up to cubic nonlinearities by Özsarı and Yolcu, \cite{turker2019}. The challenge for constructing global solution for the biharmonic equation posed on the half-line lies in the fact that unlike the IVP, the IBVP for the biharmonic equation does not satisfy the energy conservation laws once the boundary data are nonzero. Later, Başakoğlu \cite{Basakoglu2021} has extended the result of \cite{turker2019} for the cubic biharmonic equation to more regular spaces  by using the nonlinear smoothing property of the equation and has obtained the linear growth bound for the solution on the half-line. 

 Xia and Pausader \cite{pausader2013scattering} established scattering in the energy space $H^{2}$ for low spatial dimensions $1\leq d\leq4 $. The difficulty of obtaining global well-posedness and scattering result in low dimensions is the lack of Morawetz-type inequalities adapted to small dimensions. To overcome this difficulty the authors construct a new virial-type estimate to establish the scattering result. In this work, we aim to obtain the global well-posedness for the IVP \eqref{eq:4NLS} below the energy space. In this regard, our approach, which is inspired by \cite{colliander2002almost}, in the most broad terms will be to generate an almost conserved quantity by modifying the energy functional \eqref{energyfunc} and then using it to find a priori polynomial-in-time bound for the $H^{s}$-norm of the solution. Our main result is as follows:
\begin{theorem}\label{main_thm}
The initial value problem \eqref{eq:4NLS} is globally well-posed for initial data $u_{0}\in H^{s}(\mathbb{R}^{2})$ for $s>\frac{8}{7}$.
\end{theorem}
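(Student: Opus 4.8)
The plan is to run the $I$-method in Fourier restriction (Bourgain) spaces $X^{s,b}$ adapted to the biharmonic group $e^{it\Delta^{2}}$, with norm $\|u\|_{X^{s,b}}=\|\langle\xi\rangle^{s}\langle\tau-|\xi|^{4}\rangle^{b}\widehat{u}(\tau,\xi)\|_{L^{2}_{\tau,\xi}}$. First I would introduce the smoothing multiplier $I=I_{N}$ with symbol $m(\xi)$ equal to $1$ for $|\xi|\le N$ and to $(N/|\xi|)^{2-s}$ for $|\xi|\ge 2N$, so that $\|Iu\|_{H^{2}}\lesssim N^{2-s}\|u\|_{H^{s}}$ and $Iu$ lives in the energy space even though $u$ does not. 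The quantity to control is the modified energy $E(Iu)$, with $E$ as in \eqref{energyfunc}. Using the scaling \eqref{eq:scalesymm}, which leaves the $L^{2}$-norm invariant and sends $\|u_{0}\|_{\dot H^{s}}$ to $\lambda^{-s}\|u_{0}\|_{\dot H^{s}}$, I would rescale the data so that $E(Iu_{0,\lambda})\lesssim 1$; the above estimate forces the choice $\lambda\sim N^{(2-s)/s}$. A local theory for the $I$-modified equation in $X^{2,\frac12+}$ — proved by contraction using the quintic nonlinear estimate $\|I(|u|^{4}u)\|_{X^{2,-\frac12+}}\lesssim\|Iu\|_{X^{2,\frac12+}}^{5}$ — then yields existence on a step of length $\delta\sim 1$ on which $\|Iu_{\lambda}\|_{X^{2,\frac12+}}\lesssim E(Iu_{0,\lambda})^{1/2}\lesssim 1$.

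The core is an almost conservation law quantifying the drift of $E(Iu)$. Differentiating and using $i\partial_{t}Iu=\Delta^{2}Iu+I(|u|^{4}u)$, the $\Delta^{2}$-diagonal contribution cancels and one is left with a leading sextilinear term and a higher-order decilinear term:
\[
\frac{d}{dt}E(Iu)=\operatorname{Re} i\int \overline{\Delta^{2}Iu}\,\big[I(|u|^{4}u)-|Iu|^{4}Iu\big]\,dx+\operatorname{Re} i\int |Iu|^{4}Iu\,\overline{I(|u|^{4}u)}\,dx.
\]
On the Fourier side the sextilinear term carries the multiplier $|\xi_{6}|^{4}m(\xi_{6})\big(m(\xi_{6})-\prod_{j=1}^{5}m(\xi_{j})\big)$ over the set $\xi_{1}+\xi_{2}+\xi_{3}-\xi_{4}-\xi_{5}=\xi_{6}$, and the crucial feature is that this symbol vanishes when all frequencies are $\lesssim N$. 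The goal is to integrate over the step and prove
\[
\big|E(Iu)(\delta)-E(Iu)(0)\big|\lesssim N^{-3+}\big(\|Iu\|_{X^{2,\frac12+}}^{6}+\|Iu\|_{X^{2,\frac12+}}^{10}\big),
\]
i.e. a gain of essentially three negative powers of $N$ per step.

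Establishing this sextilinear estimate is the main obstacle. I would combine the biharmonic Strichartz estimates (the $2$-D dispersive bound $\|e^{it\Delta^{2}}\|_{L^{1}\to L^{\infty}}\lesssim|t|^{-1/2}$ yields the $L^{2}$-admissible pair $(q,r)=(6,6)$, tailor-made for a quintic interaction) with bilinear improvements that exploit transversality of the biharmonic characteristic surface to gain when two high frequencies are separated. After a Littlewood–Paley decomposition one orders the frequencies $N_{1}\ge\cdots\ge N_{6}$; the only dangerous regimes are those with at least two frequencies $\gtrsim N$, and there a pointwise multiplier bound, schematically of the form $|m(\xi_{6})-\prod_{j}m(\xi_{j})|\lesssim (N_{2}/N_{1})\,m(\xi_{6})$ (mean-value/commutator cancellation), must be played against the $|\xi_{6}|^{4}$ weight and the derivative gains in the Strichartz and bilinear estimates to extract the full $N^{-3+}$. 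Forcing the two largest frequencies to carry enough smoothing, rather than letting the $\Delta^{2}$ weight overwhelm the commutator gain, is the delicate point and is exactly what pins the threshold.

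Finally I would iterate and undo the scaling. Since each step costs at most $N^{-3+}$ and $E(Iu_{\lambda})$ starts at size $\lesssim 1$, the modified energy stays $O(1)$ for $\sim N^{3-}$ steps, i.e. up to rescaled time $T_{\lambda}\sim N^{3-}$. Because the scaling maps physical time $T$ to $\lambda^{4}T$, reaching a given $T$ requires $\lambda^{4}T\lesssim N^{3-}$, that is $T\lesssim N^{\,3-\,4(2-s)/s}$; the exponent is positive precisely when $s(\,3+4\,)>8$, i.e. $s>\tfrac{8}{7}$, so for such $s$ one takes $N=N(T)$ as large as needed to cover $[0,T]$. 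Undoing the rescaling and using $\|u(t)\|_{H^{s}}\lesssim \|Iu(t)\|_{H^{2}}\lesssim E(Iu(t))^{1/2}$ converts the bounded modified energy into an a priori polynomial-in-time bound on $\|u(t)\|_{H^{s}(\mathbb{R}^{2})}$, which together with the local theory closes the global well-posedness argument.
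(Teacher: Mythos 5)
Your overall architecture is exactly that of the paper: the same multiplier $m_N$, the modified local theory $\Vert Iu\Vert_{X^{2,\frac12+}_{\delta}}\lesssim 1$ via the quintic estimate $\Vert I(|u|^4u)\Vert_{X^{2,-\frac12+}_{\delta}}\lesssim\Vert Iu\Vert_{X^{2,\frac12+}_{\delta}}^{5}$, the almost conservation law with decrement $N^{-3+}$, the scaling choice $\lambda\sim N^{(2-s)/s}$, and the iteration count $T\lesssim N^{3-}/\lambda^{4}=N^{(7s-8)/s-}$ pinning $s>\frac87$ — all of this matches the paper's Section 3 line for line (two cosmetic slips: with the convention $i\partial_t u-\triangle^2u=|u|^4u$ the modulation weight is $\langle\tau+|\xi|^4\rangle$, not $\langle\tau-|\xi|^4\rangle$; and the final conversion needs the mass term, $\Vert u(t)\Vert_{H^s}^2\lesssim E(Iu(t))+\Vert u_0\Vert_{L^2}^2$, since the energy does not control low frequencies).

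The genuine gap is in the one place where the proof is actually hard: your proposed mechanism for the sextilinear estimate. You plan to extract the $N^{-3+}$ gain from ``bilinear improvements that exploit transversality of the biharmonic characteristic surface to gain when two high frequencies are separated.'' But the dangerous regime — the paper's Case 1, $N_1\sim N_2\gtrsim N\gg N_3$, where the commutator bound is only $N_3/N_2$ — has the two high frequencies \emph{comparable}, and precisely there bilinear refinements fail in $d=2$: the change-of-variables argument behind the Colliander--Keel--Staffilani--Takaoka--Tao bilinear estimate breaks down for $N_1\sim N_2$, and the fallback via Sobolev embedding plus the $L^4_{t,x}$ Strichartz estimate recovers a bilinear $L^2_{t,x}$ bound only when $d\geq 4$ (this is the content of the paper's remark after Theorem \ref{str_est}). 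The paper's substitute is the derivative-gain Strichartz estimate of Lemma \ref{X_s,b-str}, i.e.\ the $\mu$-admissible estimates of \cite{cui2007well} transferred to $X^{s,b}$, giving \eqref{keyestimate} with $\mu=1$: $\Vert P_N u\Vert_{L^2_tL^\infty_x}\lesssim N^{-1}\Vert u\Vert_{X^{0,\frac12+}_{\delta}}$, applied to \emph{both} comparable high-frequency factors; this linear gain of $N_1^{-1}N_2^{-1}$, combined with $N_3/N_2$, is what produces $N^{-3+}$, and the paper's closing remark notes that Case 1 cannot be improved and that without the $\mu>0$ gain one only gets $N^{-1+}$. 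That weaker decrement would close the iteration only for $s>\frac85$ (the exponent arithmetic gives $s>8/(k+4)$ for a decrement $N^{-k}$), so without identifying the $\mu=1$ estimate — or some other genuine substitute for bilinear transversality in two dimensions — your argument does not reach the claimed threshold $s>\frac87$.
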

It is worthwhile to note that 4-NLS \eqref{eq:G4NLS} does not possess the scaling symmetry when $\alpha\neq 0$; on the contrary the biharmonic equation \eqref{eq:G_biharmonic_NLS} has the scaling invariance with respect to \eqref{eq:scalesymm} which will be the essential property in obtaining polynomial-in-time bound for the $H^{s}$-norm of the solution.

The organization of the paper is as follows. In Section 2, we introduce required notation, function spaces and a priori estimates. In Section 3, using the growth bound for the modified energy functional, we prove the main result. In the final section, we give a proof of the almost conservation law.
\section{Background and Notation}
We write $X\lesssim Y$ if there exists a positive constant $K>2$ independent of $X,Y$ such that $X\leq KY$, also write $X\sim Y$ if $X\lesssim Y$ and $Y\lesssim X$. We denote $X\ll Y$ when $Y> KX$. Let $\langle X\rangle= \sqrt{1+X^{2}}$, and $\langle\nabla\rangle$ denote the operator with Fourier multiplier $\langle\xi\rangle$ via the Fourier transform 
\begin{equation*}
    \widehat{u}(\xi)=\int_{\mathbb{R}^{d}}e^{-ix\cdot\xi}u(x)\,\text{d}x.
\end{equation*}
 Similarly the space time Fourier transform is defined as\begin{equation*}\widetilde{u}(\tau,\xi)=\int_{\mathbb{R}^{d+1}}e^{-ix\cdot\xi-it\tau}u(t,x)\,\text{d}x\,\text{d}t.\end{equation*} Let $s\in\mathbb{R}$. We define the homogeneous Sobolev spaces \begin{equation*}
 \Dot{H}^{s}(\mathbb{R}^{d})=\{f\in\mathcal{S}'(\mathbb{R}^d): \Vert u\Vert_{\dot{H}^{s}(\mathbb{R}^{d})}=\Vert|\xi|^{s}\widehat{u}(\xi)\Vert_{L^{2}_{\xi}(\mathbb{R}^{d})}<\infty\}
 \end{equation*}
 and the nonhomogeneous Sobolev spaces $H^s(\mathbb{R}^d)$ are defined analogously with $\langle \cdot\rangle$ instead of $|\cdot|$. The Fourier restriction space $X^{s,b}$ associated with the equation \eqref{eq:4NLS} is defined to be the closure of the Schwartz functions $\mathcal{S}_{t,x}(\mathbb{R}\times \mathbb{R}^d)$ under the norm
\begin{equation}\label{eq:X_sb}
    \Vert u\Vert_{X^{s,b}(\mathbb{R}\times \mathbb{R}^d)}=\Vert\langle\xi\rangle^{s}\langle\tau+\vert\xi\vert^{4}\rangle^{b}\widetilde{u}(\tau,\xi)\Vert_{L^{2}_{\tau,\xi}(\mathbb{R}\times\mathbb{R}^d)},
\end{equation}
and the corresponding Fourier restricted norm is given as
\begin{equation}\label{eq:trunc-X_sb}
    \Vert u\Vert_{X^{s,b}_{\delta}}=\inf_{v=u\text{ on }[0,\delta]}\Vert v\Vert_{X^{s,b}}.
\end{equation}
We shall write $\frac{1}{2}\pm\equiv\frac{1}{2}\pm\varepsilon$ for some universal $0<\varepsilon\ll 1$.
Let $\varphi$ be a real-valued, smooth, compactly supported, radial function such that $\text{supp}\,\varphi\subseteq\{\xi\in\mathbb{R}^{n}:\vert\xi\vert\leq 2\}$ and $\varphi\equiv 1$ on the closed unit ball. The Littlewood-Paley projection operators are defined by
\begin{align}
    \widehat{P_{N}u}(\xi)&=(\varphi(\xi/N)-\varphi(2\xi/N))\widehat{u}(\xi)\label{eq:L-P_opt}\\ \nonumber\widehat{P_{\leq N}u}(\xi)&=\sum_{M\leq N}\widehat{P_{M}u}(\xi)=\varphi(\xi/N)\widehat{u}(\xi),\\\nonumber \widehat{P_{>N}u}(\xi)&=\sum_{M>N}\widehat{P_{M}u}(\xi)=(1-\varphi(\xi/N))\widehat{u}(\xi)
\end{align}
where $M,N$ are dyadic numbers and the sums are taken over $2^{j}$, $j\in\mathbb{Z}$. In our discussion the following Bernstein's estimate is useful: for $s\geq 0$ and $1\leq p\leq\infty$,
\begin{equation}\label{eq:Berns_ineq}
    \Vert P_{N}u\Vert_{L^{p}_{x}(\mathbb{R}^{n})}\sim_{p,n}N^{\pm s}\Vert\vert\nabla\vert^{\mp s}P_{N}u\Vert_{L^{p}_{x}(\mathbb{R}^{n})}.
\end{equation}
Also, we will use the Littlewood-Paley estimate 
\begin{equation}\label{eq:L-P_ineq}
    \Vert u\Vert_{L^{p}_{x}(\mathbb{R}^{n})}\sim_{p,n}\Vert\big(\sum_{N}\vert P_{N}u\vert^{2}\big)^{1/2}\Vert_{L^{p}_{x}(\mathbb{R}^{n})}
\end{equation}
for $1<p<\infty$. Combining \eqref{eq:Berns_ineq}, \eqref{eq:L-P_ineq} and Plancherel's theorem, we get
\begin{equation*}
    \Vert u\Vert_{\Dot{H}^{s}_{x}(\mathbb{R}^{n})}\sim_{s,n}\Big(\sum_{N}N^{2s}\Vert P_{N}u\Vert_{L^{2}_{x}(\mathbb{R}^{n})}\Big)^{1/2}
\end{equation*}
and
\begin{equation*}
    \Vert u\Vert_{H^{s}_{x}(\mathbb{R}^{n})}\sim_{s,n}\Vert P_{\leq 1}u\Vert_{L^{2}_{x}(\mathbb{R}^{n})}+\Big(\sum_{N>1}N^{2s}\Vert P_{N}u\Vert_{L^{2}_{x}(\mathbb{R}^{n})}\Big)^{1/2},
\end{equation*}
see for instance \cite{tao2006nonlinear}. Lastly, we define the $L^{q}_{t}L^{p}_x$ norm by
\begin{equation*}\label{eq:LpLqnorm}
    \Vert f\Vert_{L^{q}_{t}L^{p}_{x}(\mathbb{R}\times\mathbb{R}^{d})}=\left(\int_{\mathbb{R}}\left(\int_{\mathbb{R}^{d}}\vert f(t,x)\vert^{p}\,\text{d}x\right)^{q/p}\text{d}t\right)^{1/q}
\end{equation*}
and the pair $(p,q)$ is said to be biharmonic admissible if \begin{equation*} 
    \frac{4}{q}=d\Big(\frac{1}{2}-\frac{1}{p}\Big)\,\,\text{and}\,\, \begin{cases}
2\leq p < \frac{2d}{d-4} \hspace{0.5cm}\text{if}\,\,d\geq 4\\ 2\leq p \leq \infty \hspace{0.75cm}\text{if}\,\,d<4.
\end{cases}
\end{equation*}
Note that $(p,q,d)\neq (\infty,2,4)$.
Below we state the Strichartz estimates.
\begin{theorem}[\cite{cui2007well}]\label{str_est}
    Assume that $0\leq\mu\leq 1$, $\frac{2}{1-\mu}\leq p\leq\infty$, $2\leq q\leq\infty$, and
    \begin{equation}\label{bihadmissible}
        \frac{4}{q}=2\Big(\frac{1}{2}-\frac{1}{p}\Big)+\mu.
    \end{equation}
    Then for any $T_{0}>0$, there exists a constant $C>0$ depending on $p,q,T_{0},\mu$ such that for any $0<T<T_{0}$ and $\varphi\in L^{2}(\mathbb{R}^{2})$, we have
    \begin{align}
\Vert\vert\nabla\vert^{\mu}e^{it\triangle^{2}}\varphi\Vert_{L^{q}_{t}L^{p}_{x}([0,T]\times\mathbb{R}^{2})}&\leq C\Vert\varphi\Vert_{L^{2}_{x}(\mathbb{R}^{2})},\label{eq:str_est_hom}\\
\Vert\langle\nabla\rangle^{\mu}e^{it\triangle^{2}}\varphi\Vert_{L^{q}_{t}L^{p}_{x}([0,T]\times\mathbb{R}^{2})}&\leq C\Vert\varphi\Vert_{L^{2}_{x}(\mathbb{R}^{2})}\label{eq:str_est_inhom}.
    \end{align}
    In particular, for any biharmonic admissible pair $(p,q)$, i.e. if $\mu=0$ in \eqref{bihadmissible}, we have
    \begin{equation}
        \Vert e^{it\triangle^{2}}\varphi\Vert_{L^{q}_{t}L^{p}_{x}([0,T]\times\mathbb{R}^{2})}\leq C\Vert\varphi\Vert_{L^{2}_{x}(\mathbb{R}^{2})}.\label{eq:str_est_adm}
    \end{equation}
\end{theorem}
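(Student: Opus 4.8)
The plan is to deduce all three estimates \eqref{eq:str_est_hom}, \eqref{eq:str_est_inhom} and \eqref{eq:str_est_adm} from a single frequency-localized bound, namely
\[
\Vert e^{it\triangle^{2}}P_{N}\varphi\Vert_{L^{q}_{t}L^{p}_{x}([0,T]\times\mathbb{R}^{2})}\lesssim_{T_{0}} N^{-\mu}\Vert P_{N}\varphi\Vert_{L^{2}_{x}(\mathbb{R}^{2})},\qquad \tfrac{4}{q}=2\big(\tfrac{1}{2}-\tfrac{1}{p}\big)+\mu,
\]
valid for every dyadic $N$ and every admissible triple $(p,q,\mu)$ in the stated range. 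Granting this, the homogeneous estimate \eqref{eq:str_est_hom} follows from the Littlewood--Paley inequality \eqref{eq:L-P_ineq} together with Minkowski's inequality (legitimate since $p,q\geq 2$), which give $\Vert|\nabla|^{\mu}e^{it\triangle^2}\varphi\Vert_{L^q_tL^p_x}\lesssim(\sum_N N^{2\mu}\Vert e^{it\triangle^2}P_N\varphi\Vert_{L^q_tL^p_x}^2)^{1/2}\lesssim(\sum_N\Vert P_N\varphi\Vert_{L^2}^2)^{1/2}=\Vert\varphi\Vert_{L^2}$; the inhomogeneous version \eqref{eq:str_est_inhom} is obtained in the same way after replacing $N^\mu$ by $\langle N\rangle^\mu$ and treating the single block $P_{\leq 1}$ separately, and \eqref{eq:str_est_adm} is the case $\mu=0$. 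Thus the whole theorem reduces to the displayed single-frequency estimate.

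The two building blocks are dispersive estimates obtained by stationary phase applied to the kernel of $e^{it\triangle^2}$, whose phase is $x\cdot\xi+t|\xi|^4$. A scaling argument exploiting the homogeneity of $|\xi|^4$ yields the global bound $\Vert e^{it\triangle^2}\varphi\Vert_{L^\infty_x}\lesssim|t|^{-1/2}\Vert\varphi\Vert_{L^1_x}$, corresponding to the decay rate $d/4=1/2$. Localizing to $|\xi|\sim N$ and rescaling $\xi=N\eta$ turns the phase Hessian into a nondegenerate matrix of size $\sim tN^4$ on the annulus $|\eta|\sim1$, so a second stationary-phase computation gives the sharper frequency-localized bound $\Vert e^{it\triangle^2}P_N\varphi\Vert_{L^\infty_x}\lesssim N^{2}(1+|t|N^4)^{-1}\Vert P_N\varphi\Vert_{L^1_x}$, that is, a Schrödinger-type decay $|t|^{-1}$ accompanied by a genuine gain $N^{-2}$. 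The subtlety here is precisely that the Hessian of $|\xi|^4$ degenerates at $\xi=0$, so the improved $|t|^{-1}$ decay is available only away from the origin; at low frequencies only the weaker $|t|^{-1/2}$ rate survives.

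I would then convert each dispersive bound into Strichartz estimates by the $TT^*$ method, in the abstract Strichartz framework of Keel and Tao. Pairing the $L^2$ conservation $\Vert e^{it\triangle^2}P_N\varphi\Vert_{L^2}=\Vert P_N\varphi\Vert_{L^2}$ with the global decay $|t|^{-1/2}$ produces the biharmonic-admissible family $\frac{4}{q}=2(\frac12-\frac1p)$ with no loss in $N$ (this already gives \eqref{eq:str_est_adm}), while pairing it with the frequency-localized decay $N^{-2}|t|^{-1}$ produces the Schrödinger-admissible family $\frac4q=4(\frac12-\frac1p)$ carrying the gain $N^{-2(\frac12-\frac1p)}$. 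Interpolating between these two single-frequency estimates at a fixed spatial exponent $p$, with parameter $\theta=\mu/\big(2(\frac12-\frac1p)\big)$, produces exactly the relation $\frac4q=2(\frac12-\frac1p)+\mu$ together with the weight $N^{-\mu}$; the requirement $\theta\leq1$ is identically the hypothesis $p\geq\frac{2}{1-\mu}$, which is why that constraint appears in the statement. This yields the displayed single-frequency bound for all frequencies $N\gtrsim1$.

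The remaining point --- and the reason the estimate is local in time with a constant depending on $T_0$ --- is the low-frequency regime, where the degeneracy noted above destroys the Schrödinger-type gain and hence the $\mu$-family globally. There I would instead start from the biharmonic-admissible ($\mu=0$) estimate, which holds at every frequency, and recover the target time exponent by Hölder in time on $[0,T]$: since $\frac4q=2(\frac12-\frac1p)+\mu$ forces $q$ below the biharmonic-admissible exponent by exactly $\mu$, one gains a factor $T^{\mu/4}\leq T_0^{\mu/4}$, and the surviving $\langle N\rangle^\mu\sim1$ for $N\leq1$ keeps the square-function sum finite. Feeding both regimes into the sum completes the proof. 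The main obstacle is the degenerate stationary phase at $\xi=0$: it is what forces the two-tier dispersive analysis, dictates the high/low-frequency dichotomy, and is responsible both for the derivative parameter $\mu$ and for the local-in-time nature of the estimate; the endpoint cases are absorbed into the Keel--Tao machinery, consistent with the exclusion $(p,q,d)\neq(\infty,2,4)$.
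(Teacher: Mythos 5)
A preliminary remark: the paper does not prove Theorem \ref{str_est} at all --- it is imported verbatim from \cite{cui2007well}. So the comparison here is with the cited literature, where the proof runs through the pointwise kernel estimates of Ben-Artzi--Koch--Saut for $e^{it\triangle^{2}}$ (a dispersive bound of the form $\Vert\vert\nabla\vert^{\mu}e^{it\triangle^{2}}\Vert_{L^{1}\to L^{\infty}}\lesssim\vert t\vert^{-(2+\mu)/4}$ in $d=2$) combined with Stein's analytic interpolation in the order of the derivative and a $TT^{*}$/Hardy--Littlewood--Sobolev argument at the operator level, with no Littlewood--Paley decomposition. Your route is the standard modern alternative, and in the range $2\leq p<\infty$ it is essentially correct: the frequency-localized kernel bound $N^{2}(1+\vert t\vert N^{4})^{-1}$, the $TT^{*}$ constants (gain $N^{-2(\frac{1}{2}-\frac{1}{p})}$ on the Schr\"odinger-admissible family), the interpolation arithmetic $\theta=\mu/\big(2(\frac{1}{2}-\frac{1}{p})\big)$ reproducing exactly the constraint $p\geq\frac{2}{1-\mu}$, and the low-frequency H\"older-in-time step that produces the $T_{0}$-dependence all check out.

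The genuine gap is at $p=\infty$, which the theorem explicitly allows and which this paper actually uses: the proof of Proposition \ref{enrgyincr_4} invokes \eqref{keyestimate} with $(\mu,q,p)=(1,2,\infty)$ in Case 1, and Lemma \ref{lemma_T2} uses $(0,4,\infty)$. Two things break there. First, your reduction of the full estimates to the single-frequency bound rests on the square-function inequality \eqref{eq:L-P_ineq}, which holds only for $1<p<\infty$; at $p=\infty$ the only unconditional summation is the triangle inequality, which yields $\sum_{N}\Vert P_{N}\varphi\Vert_{L^{2}}$ rather than $\Vert\varphi\Vert_{L^{2}}$, an $\ell^{1}$-versus-$\ell^{2}$ loss. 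Second, and more seriously, at $(\mu,p,q)=(1,\infty,2)$ your interpolation parameter is $\theta=1$, so the bound you need is precisely the Schr\"odinger-family constituent at $(q,p)=(2,\infty)$ with decay exponent $\sigma=1$ --- the forbidden Keel--Tao endpoint, which the $TT^{*}$ machinery does not supply; your closing remark that endpoints are ``absorbed into the Keel--Tao machinery, consistent with the exclusion $(p,q,d)\neq(\infty,2,4)$'' misreads that exclusion, which concerns biharmonic-admissible pairs in dimension $4$, not this point. Even for a single dyadic block, a Schur test on the truncated kernel $N^{2}(1+\vert t-s\vert N^{4})^{-1}$ over $[0,T]$ gives only $\Vert e^{it\triangle^{2}}P_{N}\varphi\Vert_{L^{2}_{t}L^{\infty}_{x}}\lesssim N^{-1}\log^{1/2}(1+TN^{4})\Vert\varphi\Vert_{L^{2}}$, i.e.\ an $N^{0+}$ loss, not the clean $N^{-1}$. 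To close the gap you should either switch to the cited operator-level route (Ben-Artzi--Koch--Saut kernel bounds plus Stein interpolation, which treats all $2\leq p\leq\infty$ uniformly and avoids any Littlewood--Paley summation), or observe that for the purposes of this paper only the frequency-localized Lemma \ref{X_s,b-str} is ever used, and there a Bernstein step from large finite $p$ to $p=\infty$ costs only $N^{0+}$, which is harmless against the $N_{2}^{0-}$ and $N_{6}^{0-}$ slack in \eqref{eq: main_T_1} and \eqref{eq:main_est_T2}.
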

\noindent
\textbf{Remark.} In our case, we cannot efficiently use a kind of bilinear Strichartz estimate of Lemma 2.1 of \cite{colliander2002almost} (see for instance \cite{seong2021well} for the fourth-order version of the bilinear Strichartz estimate) especially when the magnitude of the interacting frequencies are comparable. To be able to handle the multilinear terms formed by functions with comparable magnitude of frequencies, one can utilize bilinear estimates only if the spatial dimension $n$ is at least $4$. Indeed, let $u_{1},u_{2}\in X^{0,b}([0,T]\times\mathbb{R}^{d})$, $b>1/2$, be two free solutions of the biharmonic NLS equation with initial data $f_1$, $f_2$ respectively. Assume that the Fourier supports of $u_1$ and $u_2$ lie within $\{\xi\in\mathbb{R}^{d}:\vert\xi\vert\sim N_{1}\}$ and $\{\xi\in\mathbb{R}^{d}:\vert\xi\vert\sim N_{2}\}$ respectively. If $N_{1}\sim N_{2}$, then we cannot follow a change of variable argument as in the proof of Lemma 3.4 of \cite{colliander2008global}. Therefore, we are limited to the Sobolev embedding and the Strichartz estimate 
\eqref{eq:str_est_adm} to show an inequality of the following type
\begin{equation*}
    \Vert u_{1}u_{2}\Vert_{L^{2}_{t,x}}\lesssim\Vert f_{1}\Vert_{\Dot{H}^{a_{1}}_{x}}\Vert f_{2}\Vert_{\Dot{H}^{a_{2}}_{x}}
\end{equation*}
where
\begin{equation*}
    a_{1}=a_{2}=\frac{d}{4}-1.
\end{equation*}
To see this, first apply the Hölder's inequality to get
\begin{equation*}
     \Vert u_{1}u_{2}\Vert_{L^{2}_{t,x}}\leq\Vert e^{it\triangle^{2}}f_{1}\Vert_{L^{4}_{t,x}}\Vert e^{it\triangle^{2}}f_{2}\Vert_{L^{4}_{t,x}}.
\end{equation*}
In dimension $d$, the pair $(\frac{2d}{d-2}, 4)$ is biharmonic admissible. Thus, applying the Sobolev embedding and then the Strichartz estimate \eqref{eq:str_est_adm} to the factors of the above bound yields that
\begin{equation*}
    \Vert e^{it\triangle^{2}}f_{j}\Vert_{L^{4}_{t,x}}\lesssim\Vert \vert\nabla\vert^{\frac{d-4}{4}}e^{it\triangle^{2}}f_{j}\Vert_{L^{4}_{t}L^{\frac{2d}{d-2}}_{x}}\lesssim\Vert f_{j}\Vert_{\Dot{H}^{\frac{d-4}{4}}_{x}}\quad j=1,2.
\end{equation*}
Note that the Sobolev embedding in the first inequality above is valid provided that $d\geq 4$ which determines whether or not we are able to use bilinear type estimates for pair of functions with comparable frequencies.

In our situation, combining Theorem \ref{str_est} and Lemma 2.9 from \cite{tao2006nonlinear}, we obtain $X^{s,b}$ type Strichartz estimates with derivative gain:
\begin{lemma}\label{X_s,b-str}
    Let $u\in X^{0,\frac{1}{2}+}$ whose spatial frequency is supported in the set $\{\xi\in\mathbb{R}^{2}:\vert\xi\vert\sim N\}$ for some $N\geq 1$. Then for sufficiently small $\delta>0$, we have
    \begin{equation}
        \Vert\vert\nabla\vert^{\mu} u\Vert_{L^{q}_{t}L^{p}_{x}([0,\delta]\times\mathbb{R}^{2})}\lesssim\Vert u\Vert_{X_{\delta}^{0,\frac{1}{2}+}}\label{eq:X_s,b-str}
    \end{equation}
    where $\mu,p,q$ are given as in Theorem \ref{str_est}.
\end{lemma}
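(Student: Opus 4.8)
The plan is to obtain \eqref{eq:X_s,b-str} from the linear Strichartz estimate \eqref{eq:str_est_hom} by the transference principle (Lemma 2.9 of \cite{tao2006nonlinear}): a space-time estimate valid for the free biharmonic flow is inherited by general functions once they are measured in $X^{0,b}$ with $b>\frac{1}{2}$. I would first reduce to a global estimate and dispose of the truncated norm. Write $b=\frac{1}{2}+$, fix $T_{0}=1$ and take $0<\delta<1$; this fixes the meaning of ``$\delta$ sufficiently small'' and freezes the constant in Theorem \ref{str_est}. Let $v$ be any function on $\mathbb{R}\times\mathbb{R}^{2}$ with $v=u$ on $[0,\delta]$. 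Since $u$ and $v$ agree on the slab $[0,\delta]\times\mathbb{R}^{2}$,
\begin{equation*}
\||\nabla|^{\mu}u\|_{L^{q}_{t}L^{p}_{x}([0,\delta]\times\mathbb{R}^{2})}=\||\nabla|^{\mu}v\|_{L^{q}_{t}L^{p}_{x}([0,\delta]\times\mathbb{R}^{2})},
\end{equation*}
so it suffices to prove $\||\nabla|^{\mu}v\|_{L^{q}_{t}L^{p}_{x}([0,\delta]\times\mathbb{R}^{2})}\lesssim\|v\|_{X^{0,\frac{1}{2}+}}$ and then take the infimum over all such extensions $v$, which by \eqref{eq:trunc-X_sb} returns $\|u\|_{X^{0,\frac{1}{2}+}_{\delta}}$ on the right.

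For the extension estimate I would run the transference argument. Let $e^{-it\triangle^{2}}$ denote the free propagator attached to the phase in \eqref{eq:X_sb} (its Strichartz bounds coincide with those in \eqref{eq:str_est_hom} by time reversal). Decomposing $v$ in its modulation variable, write
\begin{equation*}
v(t,x)=\frac{1}{2\pi}\int_{\mathbb{R}}e^{it\lambda}\big(e^{-it\triangle^{2}}f_{\lambda}\big)(x)\,d\lambda,\qquad \widehat{f_{\lambda}}(\xi)=\widetilde{v}(\lambda-|\xi|^{4},\xi),
\end{equation*}
so that a change of variables in the modulation weight yields the Plancherel-type identity
\begin{equation*}
\int_{\mathbb{R}}\langle\lambda\rangle^{2b}\|f_{\lambda}\|_{L^{2}_{x}(\mathbb{R}^{2})}^{2}\,d\lambda=\|v\|_{X^{0,b}}^{2}.
\end{equation*}
Applying $|\nabla|^{\mu}$, taking the $L^{q}_{t}L^{p}_{x}([0,\delta])$ norm, and using Minkowski's inequality in $\lambda$, together with the facts that $|\nabla|^{\mu}$ acts only in $x$ and the scalar factor $e^{it\lambda}$ has modulus one, I obtain
\begin{equation*}
\||\nabla|^{\mu}v\|_{L^{q}_{t}L^{p}_{x}([0,\delta])}\le\frac{1}{2\pi}\int_{\mathbb{R}}\big\||\nabla|^{\mu}e^{-it\triangle^{2}}f_{\lambda}\big\|_{L^{q}_{t}L^{p}_{x}([0,\delta])}\,d\lambda.
\end{equation*}

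Now each integrand is exactly the left-hand side of the homogeneous Strichartz estimate \eqref{eq:str_est_hom} on the interval $[0,\delta]\subset[0,T_{0}]$, for the admissible triple $(\mu,p,q)$ fixed in Theorem \ref{str_est}; hence it is $\lesssim\|f_{\lambda}\|_{L^{2}_{x}}$ with a constant uniform in $\lambda$ and in $\delta<1$. It remains to integrate in $\lambda$, where Cauchy--Schwarz together with the identity above gives
\begin{equation*}
\int_{\mathbb{R}}\|f_{\lambda}\|_{L^{2}_{x}}\,d\lambda\le\Big(\int_{\mathbb{R}}\langle\lambda\rangle^{-2b}\,d\lambda\Big)^{1/2}\Big(\int_{\mathbb{R}}\langle\lambda\rangle^{2b}\|f_{\lambda}\|_{L^{2}_{x}}^{2}\,d\lambda\Big)^{1/2}=C_{b}\,\|v\|_{X^{0,b}},
\end{equation*}
where $C_{b}<\infty$ precisely because $2b>1$. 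Combining the last three displays yields the extension estimate, and taking the infimum over $v$ completes the proof.

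This is a soft argument and I do not expect a genuine obstacle; the only points requiring care are bookkeeping ones. The first is the local-in-time nature of Theorem \ref{str_est} (its constant depends on $T_{0}$): this causes no trouble because the transference invokes the Strichartz estimate on the very interval $[0,\delta]$ appearing on the left, so a single choice $T_{0}=1$ controls everything. The second is the threshold $b>\frac{1}{2}$, dictated solely by the integrability of $\langle\lambda\rangle^{-2b}$, which is the whole reason the regularity $\frac{1}{2}+$ is imposed. Finally, the spatial frequency localization to $\{|\xi|\sim N\}$ with $N\ge 1$ plays no essential role in this particular bound---the homogeneous estimate \eqref{eq:str_est_hom} already carries all frequencies at the $s=0$ level---so it may simply be carried along; it is recorded because this is the form in which the lemma is applied later when summing Littlewood--Paley pieces.
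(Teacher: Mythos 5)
Your proof is correct and follows essentially the same route as the paper's: both foliate $u$ (or its extension) over the modulation variable as $\int e^{it\lambda}e^{\mp it\triangle^{2}}f_{\lambda}\,d\lambda$, apply the homogeneous Strichartz estimate \eqref{eq:str_est_hom} to each slice, and close with Cauchy--Schwarz using the integrability of $\langle\lambda\rangle^{-2b}$ for $b>\frac{1}{2}$, in the spirit of Lemma 2.9 of \cite{tao2006nonlinear}. If anything, your treatment of the truncated norm (proving the bound for an arbitrary extension $v$ and taking the infimum per \eqref{eq:trunc-X_sb}) is tidier than the paper's, which passes to $\Vert u\Vert_{X^{0,\frac{1}{2}+}_{\delta}}$ implicitly in its final line.
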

\begin{proof}
    By Fourier inversion, we can write
    \begin{equation*}
u(t,x)=\int_{\mathbb{R}}\int_{\mathbb{R}^{2}}\widetilde{u}(\tau,\xi)e^{it\tau}e^{ix\cdot\xi}\,\text{d}\xi \text{d}\tau.
    \end{equation*}
     Set $\tau=\tau_{0}-\vert\xi\vert^{4}$ and define
    \begin{equation*}
        f_{\tau_{0}}(x)=\int_{\mathbb{R}^{2}}\widetilde{u}(\tau_{0}-\vert\xi\vert^{4},\xi)e^{ix\cdot\xi}\,\text{d}\xi.
    \end{equation*}
By noting that
    \begin{equation*}
        \widehat{f_{\tau_{0}}}(\xi)=\widetilde{u}(\tau_{0}-\vert\xi\vert^{4},\xi)
    \end{equation*}
    and 
    \begin{equation*}
        e^{it\triangle^{2}}f_{\tau_{0}}(x)=\int_{\mathbb{R}^{2}}\widetilde{u}(\tau_{0}-\vert\xi\vert^{4},\xi)e^{it\vert\xi\vert^{4}}e^{ix\cdot\xi}\,\text{d}\xi,
    \end{equation*}
we may write
    \begin{equation*}
        u(t,x)=\int_{\mathbb{R}}e^{it\tau_{0}}e^{it\triangle^{2}}f_{\tau_{0}}(x)\,\text{d}\tau_{0}.
    \end{equation*}
    Now, to implement Lemma 2.9 from \cite{tao2006nonlinear}, we need to show that $f_{\tau_{0}}\in L^{2}(\mathbb{R}^{2})$ for all $\tau_{0}\in\mathbb{R}$. Indeed, it is enough to exhibit $\widehat{f_{\tau_{0}}}\in L^{2}(\mathbb{R}^{2})$:
    \begin{align*}
        \Vert \widehat{f_{\tau_{0}}}\Vert_{L^{2}}^{2}&=\int\vert\widetilde{u}(\tau_{0}-\vert\xi\vert^{4},\xi)\vert^{2}\,\text{d}\xi\\
        &=\int\Bigg\vert\int\widetilde{u}(\tau,\xi)\delta(\tau+\vert\xi\vert^{4}-\tau_{0})\,\text{d}\tau\Bigg\vert^{2} \,\text{d}\xi\\
        &=\int\Bigg\vert\int\widetilde{u}(\tau,\xi)\delta(\tau+\vert\xi\vert^{4}-\tau_{0})\langle\tau+\vert\xi\vert^{4}\rangle^{\frac{1}{2}+}\langle\tau+\vert\xi\vert^{4}\rangle^{-\frac{1}{2}-} \,\text{d}\tau\Bigg\vert^{2} \text{d}\xi\\
        &\leq\int\Bigg(\int\vert\widetilde{u}(\tau,\xi)\vert\delta(\tau+\vert\xi\vert^{4}-\tau_{0})\langle\tau+\vert\xi\vert^{4}\rangle^{\frac{1}{2}+}\langle\tau+\vert\xi\vert^{4}\rangle^{-\frac{1}{2}-}\,\text{d}\tau\Bigg)^{2} \,\text{d}\xi\\
        &\leq\int\Bigg(\int\vert\widetilde{u}(\tau,\xi)\vert^{2}\langle\tau+\vert\xi\vert^{4}\rangle^{1+}\,\text{d}\tau\Bigg)\Bigg(\int \delta(\tau+\vert\xi\vert^{4}-\tau_{0})\langle\tau+\vert\xi\vert^{4}\rangle^{-1-}\,\text{d}\tau\Bigg)d\xi\\
        &\sim\Vert u\Vert_{X^{0,\frac{1}{2}+}}^{2}\sup_{\{\vert\xi\vert\sim N,\tau+\vert\xi\vert^{4}=\tau_{0}\}}\int \langle\tau_{0}\rangle^{-1-}\,\text{d}\tau_{0}\lesssim \Vert u\Vert_{X^{0,\frac{1}{2}+}}^{2}.
    \end{align*}
    So, we conclude that $f\in L^{2}(\mathbb{R}^{2})$. Also, by Theorem \ref{str_est}, we have
    \begin{equation*}
        \Vert e^{it\tau_{0}}e^{it\triangle^{2}}\vert\nabla\vert^{\mu}f_{\tau_{0}}\Vert_{L^{q}_{t}L^{p}_{x}([0,\delta]\times\mathbb{R}^{2})}\lesssim\Vert f_{\tau_{0}}\Vert_{L^{2}_{x}(\mathbb{R}^{2})}.
    \end{equation*}
    Then, utilizing the above estimate and returning back the original variable $\tau$ we arrive at
    \begin{align*}
        \Vert \vert\nabla\vert^{\mu}u\Vert_{L^{q}_{t}L^{p}_{x}([0,\delta]\times\mathbb{R}^{2})}&=\Bigg\Vert\int_{\mathbb{R}}e^{it\tau_{0}}e^{it\triangle^{2}}\vert\nabla\vert^{\mu}f_{\tau_{0}}(x)\,\text{d}\tau_{0}\Bigg\Vert_{L^{q}_{t}L^{p}_{x}([0,\delta]\times\mathbb{R}^{2})}\\
        &\leq\int_{\mathbb{R}}\Vert e^{it\tau_{0}}e^{it\triangle^{2}}\vert\nabla\vert^{\mu}f_{\tau_{0}}(x)\Vert_{L^{q}_{t}L^{p}_{x}([0,\delta]\times\mathbb{R}^{2})}\,\text{d}\tau_{0}\\
        &\lesssim\int_{\mathbb{R}}\Vert f_{\tau_{0}}\Vert_{L^{2}_{x}(\mathbb{R}^{2})}\,\text{d}\tau_{0}\\
        &\leq\Bigg(\int_{\mathbb{R}}\Vert\widehat{f_{\tau_{0}}}\Vert^{2}_{L^{2}_{\xi}(\mathbb{R}^{2})}\langle\tau_{0}\rangle^{1+}\text{d}\tau_{0}\Bigg)^{1/2}\Bigg(\int_{\mathbb{R}}\langle\tau_{0}\rangle^{-1-}\,\text{d}\tau_{0}\Bigg)^{1/2}\\
        &\lesssim\Bigg(\int\int\langle\tau+\vert\xi\vert^{4}\rangle^{1+}\vert\widetilde{u}(\tau,\xi)\vert^{2}\,\text{d}\xi \text{d}\tau\Bigg)^{1/2}=\Vert u\Vert_{X^{0,\frac{1}{2}+}_{\delta}}.
    \end{align*}
\end{proof} \noindent
\textbf{Remark.} In view of \eqref{eq:X_s,b-str} and \eqref{eq:Berns_ineq}, we have
\begin{equation}\label{keyestimate}
    \Vert u\Vert_{L^{q}_{t}L^{p}_{x}([0,\delta]\times\mathbb{R}^{2})}\lesssim N^{-\mu}\Vert u\Vert_{X^{0,\frac{1}{2}+}_{\delta}}.
\end{equation}
\section{Almost Conservation and Proof of the Main Theorem}
Given $s<2$ and a parameter $N\gg 1$, define the Fourier multiplier operator 
\begin{equation}\label{eq:I-opt}
    \widehat{I_{N}f}(\xi)=m_{N}(\xi)\widehat{f}(\xi),
\end{equation}
where 
\begin{equation}\label{eq:defn-of-m}
    m_{N}(\xi)=\begin{cases}
    1\quad&\vert\xi\vert\leq N,\\
    \vert\xi\vert^{s-2}N^{2-s}\quad &\vert\xi\vert>2N.
    \end{cases}
\end{equation}
that is smooth, radial, nonincreasing in $\vert\xi\vert$.
For simplicity, we shall drop the subscript $N$ in \eqref{eq:I-opt} and \eqref{eq:defn-of-m}. The multiplier $m$ satisfies the condition
\begin{equation*}
    \vert\nabla_{\xi}^{j}m\vert\lesssim\vert\xi\vert^{-j}\text{ for $j\geq 0$, where $\xi\in\mathbb{R}^{n}\setminus\{0\}$}
\end{equation*}
implying that $m$ is a Hörmander-Mikhlin multiplier \cite{Shamir1966}. Consequently, the operator $I$ is bounded on $L^{p}(\mathbb{R}^{n})$ for $1<p<\infty$. Note that
\begin{align}
    E(Iu(t))\leq&\left(N^{2-s}\Vert u(t)\Vert_{\Dot{H}^{s}_{x}(R^{n})}\right)^{2}+\Vert u(t)\Vert_{L^{6}_{x}(\mathbb{R}^{n})}^{6},\label{eq:me<Hs}\\
    \Vert u(t)\Vert_{H^{s}_{x}(\mathbb{R}^{n})}^{2}&\lesssim E(Iu(t))+\Vert u_{0}\Vert_{L^{2}_{x}(\mathbb{R}^{n})}^{2}.\label{eq:Hs<me}
\end{align}
In order to establish Theorem \ref{main_thm}, by the usual density argument, it suffices to show that the solution of \eqref{eq:4NLS} with a compactly supported smooth initial data grows at most polynomially in the $H^{s}_{x}$ norm:
\begin{equation}\label{eq:polbound}
    \Vert u(t)\Vert_{H^{s}_{x}(\mathbb{R}^{2})}\lesssim C_{1}t^{M}+C_{2},
\end{equation}
where the constants $C_{1},C_{2},M$ depend on $H^s$ norm of initial data. The next proposition will play a crucial role in establishing \eqref{eq:polbound}. The main idea is to control the growth of the almost conserved quantity \begin{equation*}
    E(Iu(t))=\int_{\mathbb{R}^{2}}\frac{1}{2}\vert\triangle Iu(t,x)\vert^{2}+\frac{1}{6}\vert Iu(t,x)\vert^{6}\,\text{d}x
\end{equation*}
by means of powers of $N$ depending on the spatial dimension. Time differentiation gives that
\begin{align*}
    \partial_{t}E(Iu(t))&=\Re \int_{\mathbb{R}^{2}}\triangle^{2}Iu\partial_{t}\overline{Iu}+|Iu|^4Iu\,\partial_{t}\overline{Iu}\,\text{d}x\\&=\Re\int_{\mathbb{R}^{2}}\partial_{t}\overline{Iu}(|Iu|^4Iu-I(|u|^4u))\,\text{d}x.
\end{align*}
Integrating this from $0$ to $\delta$ and using the Plancherel formula yields that
\begin{align*}
    E(Iu(\delta))-E(Iu(0))=&\begin{multlined}[t]
    \Re\int_{0}^{\delta}\int_{\sum_{i=1}^{6}\xi_{i}=0}\left(1-\frac{m_{23456}}{m_{2}m_{3}m_{4}m_{5}m_{6}}\right)\widehat{\partial_{t}\overline{Iu}}(\xi_{1})\widehat{Iu}(\xi_{2})\\\times\widehat{\overline{Iu}}(\xi_{3})\widehat{Iu}(\xi_{4})\widehat{\overline{Iu}}(\xi_{5})\widehat{Iu}(\xi_{6})\,\text{d}t
    \end{multlined}
    \\\lesssim&
    \begin{multlined}[t]
    \Bigg\vert\int_{0}^{\delta}\int_{\sum_{i=1}^{6}\xi_{i}=0}\left(1-\frac{m_{23456}}{m_{2}m_{3}m_{4}m_{5}m_{6}}\right)\widehat{\triangle^{2}\overline{Iu}}(\xi_{1})\widehat{Iu}(\xi_{2})\\\times\widehat{\overline{Iu}}(\xi_{3})\widehat{Iu}(\xi_{4})\widehat{\overline{Iu}}(\xi_{5})\widehat{Iu}(\xi_{6})\,\text{d}t\Bigg\vert
    \end{multlined}
    \\+&
    \begin{multlined}[t]
    \Bigg\vert\int_{0}^{\delta}\int_{\sum_{i=1}^{6}\xi_{i}=0}\left(1-\frac{m_{23456}}{m_{2}m_{3}m_{4}m_{5}m_{6}}\right)\widehat{\overline{I(\vert u\vert^{4}u)}}(\xi_{1})\widehat{Iu}(\xi_{2})\\\times\widehat{\overline{Iu}}(\xi_{3})\widehat{Iu}(\xi_{4})\widehat{Iu}(\xi_{5})\widehat{\overline{Iu}}(\xi_{6})\,\text{d}t\Bigg\vert
    \end{multlined}
    \\=:&\, Term_{1}+Term_{2}
\end{align*}
where, for simplicity, we have written $m_{j}=m(\xi_{j})$ and $m_{ij}=m(\xi_{ij})=m(\xi_{i}+\xi_{j})$ above. Next, we restrict our attention to showing that $Term_{1}+Term_{2}\lesssim N^{-k}$ for some $k>0$ depending on the spatial dimension, which is the content of the following proposition.
\begin{proposition}\label{enrgyincr_4}
Given $s>\frac{8}{7}$, $N\gg 1$, and initial data $u_{0}\in C_{c}^{\infty}(\mathbb{R}^{2})$ with $E(Iu_{0})\leq 1$, there exists a $\delta>0$ depending on the mass $\Vert u_{0}\Vert_{L^{2}(\mathbb{R}^{2})}$ of the initial data so that the solution $$u\in C([0,\delta],H^{s}(\mathbb{R}^{2}))$$ of \eqref{eq:4NLS} satisfies
\begin{equation}
    E(Iu(t))-E(Iu(0))=O(N^{-3+})
\end{equation}
for all $t\in[0,\delta]$.
\end{proposition}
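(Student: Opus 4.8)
The plan is to estimate $Term_1$ and $Term_2$ separately after the substitution of the equation already carried out in the excerpt, reducing everything to a pointwise bound on the symbol together with the multilinear Strichartz estimate \eqref{keyestimate}. Before estimating, I would record the local-in-time a priori bound $\|Iu\|_{X^{2,\frac{1}{2}+}_{\delta}}\lesssim 1$, which follows from the local well-posedness theory in the $I$-modified setting together with the hypothesis $E(Iu_0)\le 1$ and the choice of $\delta$ depending on the mass $\|u_0\|_{L^2}$; this is the quantity in terms of which every factor is ultimately measured. I would then decompose each of the six factors with Littlewood--Paley projections $P_{N_j}$ and, using the symmetry of the form in the slots carrying $Iu$ and $\overline{Iu}$, reduce to ordered frequency configurations. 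Since $\sum_{i=1}^{6}\xi_i=0$ forces $|\xi_1|=|\xi_2+\cdots+\xi_6|$, and since $m\equiv 1$ below frequency $N$, the relevant symbol is supported where at least the two largest frequencies are $\gtrsim N$; this is what produces the negative power of $N$.

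The heart of the matter is the pointwise control of $\sigma:=1-\frac{m_{23456}}{m_2\cdots m_6}$, where $m_{23456}=m(\xi_1)$ by the zero-sum relation and the radiality of $m$. Because $\xi_1=-(\xi_2+\cdots+\xi_6)$ forces the two largest frequencies to be comparable to the maximal frequency $N_{\max}$, there is never a single dominant mode. Using that $m$ is radial, nonincreasing and smooth with $|\nabla_\xi^j m|\lesssim|\xi|^{-j}$, and that differences of magnitudes obey $\big||\xi_i|-|\xi_k|\big|\le|\xi_i+\xi_k|$, I would derive a bound on $\sigma$ that gains a ratio of a lower frequency to $N_{\max}$. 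This gain degenerates to $O(1)$ precisely when several frequencies are simultaneously comparable and $\gtrsim N$, which is the resonant regime flagged in the Remark: since $d=2<4$, no bilinear Strichartz estimate is available there and all decay must instead be extracted from \eqref{keyestimate}.

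For $Term_1$ the first slot carries $\widehat{\triangle^2\overline{Iu}}(\xi_1)=|\xi_1|^4\widehat{\overline{Iu}}(\xi_1)$, contributing up to $N_{\max}^4$ derivatives that must be absorbed. After using Plancherel to rewrite the frequency integral as a space-time integral of a product of six functions, I would apply H\"older in $t,x$ with exponents satisfying $\sum_j 1/p_j=\sum_j 1/q_j=1$ and each pair $(p_j,q_j)$ biharmonic admissible with parameter $\mu_j$, then invoke \eqref{keyestimate} to bound each frequency-localized factor by $N_j^{-\mu_j}\|P_{N_j}Iu\|_{X^{0,\frac{1}{2}+}_{\delta}}$. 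Converting each $X^{0,\frac{1}{2}+}_{\delta}$ norm to $X^{2,\frac{1}{2}+}_{\delta}$ through \eqref{eq:Berns_ineq} produces a further $N_j^{-2}$, so a given configuration is controlled by $N_1^{4}\,|\sigma|\,\prod_j N_j^{-\mu_j-2}$ times a product of $\|P_{N_j}Iu\|_{X^{2,\frac{1}{2}+}_{\delta}}$, which is square-summable with sum $\lesssim 1$. Concentrating the derivative gain $\mu_j$ on the high-frequency factors, and summing the resulting multiple dyadic series by Cauchy--Schwarz over the two high modes and the symbol gain over the rest, I would check that the net power of $N_{\max}$ is strictly negative in every case and that the sums converge, giving $Term_1=O(N^{-3+})$; the borderline in this exponent count is exactly what forces $s>\tfrac{8}{7}$.

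The term $Term_2$ is handled in the same spirit, with the first slot carrying $\widehat{\overline{I(|u|^4 u)}}(\xi_1)$, so that the form is genuinely decic in $u$; however it contains no fourth-order derivative, so the derivative budget is far smaller and the additional factors are absorbed by the same admissible Strichartz norms and the a priori bound, at the cost of harmless powers of $\delta$ from the time integration and a fractional Leibniz argument to distribute $I$ across the product. The step I expect to be the genuine obstacle is the comparable-frequency regime $N_1\sim N_2\gtrsim N$ in $Term_1$: with no gain from $\sigma$ and no bilinear Strichartz estimate at our disposal in two dimensions, all decay must be squeezed out of \eqref{keyestimate}, and balancing the four derivatives from $\triangle^2$ against the available gains $\mu_j$ while keeping the dyadic sums summable is what pins down both the decay rate $N^{-3+}$ and the threshold $s>\tfrac{8}{7}$.
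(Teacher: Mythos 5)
Your overall architecture is exactly the paper's: you record the modified local existence bound $\Vert Iu\Vert_{X^{2,\frac{1}{2}+}_{\delta}}\lesssim 1$ with $\delta=\delta(\Vert u_{0}\Vert_{L^{2}})$, perform a Littlewood--Paley decomposition with symmetry reduction to ordered frequencies, bound the symbol pointwise case by case, apply H\"older with biharmonic admissible pairs together with the refined estimate \eqref{keyestimate} to extract $N_{j}^{-\mu_{j}}$ from the high modes, convert to $X^{2,\frac{1}{2}+}_{\delta}$ norms via \eqref{eq:Berns_ineq}, and sum dyadically with an $N_{2}^{0-}$ factor; $Term_{2}$ via fractional Leibniz and the same Strichartz norms is likewise the paper's route. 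However, there is one concrete error in your plan: the claim that the symbol $\sigma=1-\frac{m_{23456}}{m_{2}m_{3}m_{4}m_{5}m_{6}}$ \emph{degenerates to $O(1)$} in the comparable-frequency regime is false. In the regime $N_{2}\sim N_{3}\sim K\gtrsim N$ with $N_{4},N_{5},N_{6}\lesssim N$, one has $m(N_{2})m(N_{3})\sim (K/N)^{2(s-2)}$, so the only available pointwise bound is the trivial one, $\vert\sigma\vert\lesssim\frac{m(N_{1})}{m(N_{2})\cdots m(N_{6})}$, which grows like $(K/N)^{2(2-s)}$ as $K\to\infty$. An exponent count run with $\vert\sigma\vert\lesssim 1$ in that regime asserts a per-piece bound that is simply not justified, so your summation step as written does not close.

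The repair is what the paper does in its Sub-cases 2a and 2b: carry the full ratio $\frac{m(N_{1})}{\prod_{j\geq 2}m(N_{j})}$ through the H\"older/Strichartz chain and absorb it using the monotonicity of $x\mapsto m(x)x^{p}$, which is nondecreasing precisely when $p\geq 2-s$ (the paper invokes $p>6/7$, i.e. $s>\frac{8}{7}$), yielding bounds such as $m(N_{2})N_{2}^{4-}\gtrsim N^{4-}$ and $m(N_{j})N_{j}^{3/2}\gtrsim 1$ for the low modes. This also corrects your attribution of the regularity threshold: part of the constraint on $s$ enters already in absorbing this symbol loss in the resonant cases, not only in the derivative-versus-Strichartz balancing you highlight (and the sharp value $\frac{8}{7}$ is ultimately pinned down by the global iteration count $T_{0}\sim N^{\frac{7s-8}{s}-}$ in Section 3). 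By contrast, the decay rate $N^{-3+}$ in \eqref{eq: main_T_1} is in fact dictated by the regime you treat as easy, $N_{1}\sim N_{2}\gtrsim N\gg N_{3}$, where the mean value bound $\vert\sigma\vert\lesssim N_{3}/N_{2}$ combined with \eqref{keyestimate} at $\mu=1$ on the two high factors in $L^{2}_{t}L^{\infty}_{x}$ gives exactly $N^{-3+}$, while the resonant cases actually yield the stronger $N^{-7/2+}$ and $N^{-4+}$; with the corrected symbol bound your scheme then closes exactly as in the paper.
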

Before getting to the proof of the Proposition \ref{enrgyincr_4}, let us see how the Proposition \ref{enrgyincr_4} proves the Theorem \ref{main_thm}. Substituting the scaled solution \eqref{eq:scalesymm} with large $\lambda$ in \eqref{eq:me<Hs} leads to
\begin{align}
    E(Iu_{0,\lambda})\lesssim& \big(\lambda^{-2s}N^{4-2s}+\lambda^{-4}\big)\big(1+\Vert u_{0}\Vert_{H^{s}(\mathbb{R}^{2})}\big)^{6}\label{eq:sc-me<Hs-1}\nonumber\\\lesssim& C_{0}\lambda^{-2s}N^{4-2s}\big(1+\Vert u_{0}\Vert_{H^{s}(\mathbb{R}^{2})}\big)^{6}.\nonumber
\end{align}
We pick the scaling parameter $\lambda=\lambda\big(N,\Vert u_{0}\Vert_{H^{s}(\mathbb{R}^{2})}\big)$ as 
\begin{equation}\label{eq:choice_scal_param}
    \lambda=\Big(\frac{1}{2C_{0}}\Big)^{\frac{1}{2s}}N^{\frac{4-2s}{2s}}\big(1+\Vert u_{0}\Vert_{H^{s}(\mathbb{R}^{2})}\big)^{\frac{6}{2s}}
\end{equation}
so that $E(Iu_{0,\lambda})\leq\frac{1}{2}$ and then we apply Proposition \ref{enrgyincr_4} to the scaled initial data $u_{0,\lambda}$ iteratively, until the size of $E(Iu_{\lambda}(t))$ reaches $1$. To be more precise, we run Proposition \ref{enrgyincr_4} at least $C_{1}N^{3-}$ many times to achieve
\begin{equation}\label{eq:me_sim_1}
    E(Iu_{\lambda}(C_{1}N^{3-}\delta))\sim 1.
\end{equation}
Besides, for any time parameter $T_{0}\gg 1$, we choose $N\gg 1$ so that
\begin{equation}\label{choice_of_N}
    T_{0}\sim\frac{N^{3-}}{\lambda^{4}}C_{1}\delta\sim N^{\frac{7s-8}{s}-}.
\end{equation}
Therefore, combining \eqref{eq:choice_scal_param}, \eqref{eq:me_sim_1} and \eqref{choice_of_N}, it turns out that
\begin{equation}
    E(Iu(T_{0}))=\lambda^{4}E(Iu_{\lambda}(\lambda^{4}T_{0}))\lesssim_{\delta,\Vert u_{0}\Vert_{H^{s}(\mathbb{R}^{2})}}\lambda^{4}\sim N^{\frac{8-4s}{s}}\sim T_{0}^{\frac{8-4s}{7s-8}+}\label{eq:me_large_time}.
\end{equation}
As a result, using \eqref{eq:Hs<me} and \eqref{eq:me_large_time}, we conclude that
\begin{equation}
    \Vert u(T_{0})\Vert_{H^{s}(\mathbb{R}^{n})}\lesssim T_{0}^{\frac{8-4s}{7s-8}+}.\nonumber
\end{equation}
The desired polynomial bound \eqref{eq:polbound} is then obtained when $s>\frac{8}{7}$.
\section{Proof of Proposition \ref{enrgyincr_4}}
Before giving the proof of the Proposition \ref{enrgyincr_4}, we need to make some preparation concerning the terms $Term_{j}$, $j=1,2$. We start estimating $Term_{1}$. By means of the Littlewood-Paley decomposition, we define
\begin{equation*}
    \widehat{u_{N_{1}}}= \widehat{P_{N_{1}}\triangle^{2}Iu},\quad \widehat{u_{N_{i}}}=\widehat{P_{N_{i}}Iu},\,i=2,3,4,5,6
\end{equation*}
where $P_{N}$ is the Littlewood-Paley projection operator \eqref{eq:L-P_opt} and $N_{j}=2^{k_{j}}$, $k_{j}\in\{0,1,2,\dots\}$ for $j=1,\dots,6$. Via the decomposition, we have
\begin{align}
    Term_{1}
    \leq&\begin{multlined}[t]\label{eq:L-P_piece_T_1}
    \sum_{N_{1},\dots,N_{6}}\Bigg\vert\int_{0}^{\delta}\int_{\sum_{i=1}^{6}\xi_{i}=0}\Big(1-\frac{m_{23456}}{m_{2}m_{3}m_{4}m_{5}m_{6}}\Big)\widehat{\overline{u_{N_{1}}}}(\xi_{1})\widehat{u_{N_{2}}}(\xi_{2})\widehat{\overline{u_{N_{3}}}}(\xi_{3})\\\times\widehat{u_{N_{4}}}(\xi_{4}) \widehat{\overline{u_{N_{5}}}}(\xi_{5})\widehat{u_{N_{6}}}(\xi_{6})\Bigg\vert
    \end{multlined}
\end{align}
where the sums are taken over the dyadic numbers $N_{i}=2^{k_{i}}$, $k_{i}\in\{0,1,2,\dots\}$ and $\langle\xi_{i}\rangle\sim N_{i}$ for $i=1,\dots,6$. Due to the symmetry of the variables $\xi_{2},\xi_{3},\xi_{4},\xi_{5},\xi_{6}$ in the multiplier, we may restrict our attention to the case $N_{2}\geq N_{3}\geq N_{4}\geq N_{5}\geq N_{6}$ only. Then we always have $N_{1}\lesssim N_{2}$. Henceforth, the strategy is to bound each integral in the sum \eqref{eq:L-P_piece_T_1} depending on the relative size of the frequencies, and then to sum all the bounds via \eqref{eq:L-P_ineq}. Also, without loss of generality, we may assume that the spatial Fourier transform of dyadic pieces are non-negative. Under this assumption, we can take the multiplier out with a pointwise bound
\begin{equation*}
    \Big| 1-\frac{m_{23456}}{m_{2}m_{3}m_{4}m_{5}m_{6}}\Big|\lesssim C(N_{1},N_{2},N_{3},N_{4},N_{5},N_{6})
\end{equation*}
where $C(N_{1},N_{2},N_{3},N_{4},N_{5},N_{6})=: C>0$ will be determined suitably in the different frequency interaction cases. Combining the arguments above, we wish to demonstrate
\begin{align}\label{eq: main_T_1}
    C\Big\Vert \prod_{j=1}^6u_{N_{j}}\Big\Vert_{L^{1}_{t,x}}\lesssim N^{-3+}N_{2}^{0-}\Vert u_{N_{1}}\Vert_{X^{-2,\frac{1}{2}+}_{\delta}}\prod_{j=2}^{6}\Vert u_{N_{j}}\Vert_{X^{2,\frac{1}{2}+}_{\delta}}.
\end{align}
Note that all $L^{p}_{t}L^{q}_{x}$ and $X^{s,b}$ norms in \eqref{eq: main_T_1} are taken on the domain $[0,\delta]\times\mathbb{R}^{2}$ and we shall keep this notation once we start the proof of Proposition \ref{enrgyincr_4}. Handling the $Term_{2}$ similar to the $Term_1$, we might express the $Term_{2}$ as follows
\begin{multline}\label{eq:Term_2}
    \Bigg|\sum_{N_{6}\geq\dots\geq N_{10}}\int_{0}^{\delta}\int_{\sum_{i=1}^{10}\xi_{i}=0}\Big(1-\frac{m_{678910}}{m_{6}m_{7}m_{8}m_{9}m_{10}}\Big)P_{N_{12345}}\widehat{\overline{I(|u|^4u)}}(\xi_{12345})\\\times\widehat{Iu_{N_{6}}}(\xi_{6})\widehat{\overline{Iu_{N_{7}}}}(\xi_{7})\widehat{Iu_{N_{8}}}(\xi_{8})\widehat{\overline{Iu_{N_{9}}}}(\xi_{9})\widehat{Iu_{N_{10}}}(\xi_{10})\Bigg|
\end{multline}
where $P_{N_{12345}}$ is the Littlewood-Paley projection operator onto the dyadic shell $N_{12345}\sim\langle\xi_{12345}\rangle$. The dyadic sum is given as in \eqref{eq:Term_2} due to again the symmetry of the multiplier. 
Therefore, it is sufficient to show that
\begin{multline}\label{eq:main_est_T2}
\Bigg|\int_{\sum_{i=1}^{10}\xi_{i}=0}\Big(1-\frac{m_{678910}}{m_{6}m_{7}m_{8}m_{9}m_{10}}\Big)P_{N_{12345}}\widehat{\overline{I(|u|^4u)}}(\xi_{12345})\\\times\widehat{Iu_{N_{6}}}(\xi_{6})\widehat{\overline{Iu_{N_{7}}}}(\xi_{7})\widehat{Iu_{N_{8}}}(\xi_{8})\widehat{\overline{Iu_{N_{9}}}}(\xi_{9})\widehat{Iu_{N_{10}}}(\xi_{10})\Bigg|\lesssim N^{-3+}N_{6}^{0-}\Vert Iu\Vert_{X^{2,\frac{1}{2}+}_{\delta}}^{5}\prod_{j=6}^{10}\Vert Iu_{N_{j}}\Vert_{X^{2,\frac{1}{2}+}_{\delta}}.
\end{multline}
We may assume $N_{12345}\lesssim N_{6}$ and $N_{6}\gtrsim N$ to omit the case where the multiplier inside is zero. The decay factor $N_{6}^{0-}$ allows us to sum on the dyadic numbers $N_{12345},N_{6},N_{7},N_{8},N_{9},N_{10}$ and we do not have to decompose separately the terms of $I(|u|^4u)$. We again take the symbol out of the integral with a pointwise bound for each frequency interaction case as
\begin{equation*}
    \Big| 1-\frac{m_{678910}}{m_{6}m_{7}m_{8}m_{9}m_{10}}\Big|\lesssim\frac{m(N_{12345})}{m(N_{6})m(N_{7})m(N_{8})m(N_{9})m(N_{10})}
\end{equation*}
and undo the Plancherel formula by assuming that the Fourier transform of the dyadic pieces of $u$ are non-negative. In addition, we need the following result:
\begin{lemma}[Modified local existence] Given $\frac{8}{7}<s<2$ and the initial data $u_{0}$ for the equation \eqref{eq:4NLS} with $E(Iu_{0})\leq 1$, there is a constant $\delta=\delta(\Vert u_{0}\Vert_{L^{2}(\mathbb{R}^{2})})>0$ such that  on $[0,\delta]$ the solution $u$ satisfies
\begin{equation*}
    \Vert Iu\Vert_{X^{2,\frac{1}{2}+}_{\delta}}\lesssim 1.
\end{equation*}
\end{lemma}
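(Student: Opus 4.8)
The plan is to run a Picard/fixed-point argument for the integral equation satisfied by $Iu$ in the space $X^{2,\frac{1}{2}+}_{\delta}$, using that $I$ is a Fourier multiplier and hence commutes with the linear propagator $e^{it\triangle^{2}}$. Applying $I$ to Duhamel's formula for \eqref{eq:4NLS} gives
\begin{equation*}
    Iu(t)=e^{it\triangle^{2}}Iu_{0}-i\int_{0}^{t}e^{i(t-t')\triangle^{2}}I(|u|^{4}u)(t')\,\text{d}t'.
\end{equation*}
First I would record the data bound: since $E(Iu_{0})\leq 1$ controls $\Vert\triangle Iu_{0}\Vert_{L^{2}}$, hence $\Vert Iu_{0}\Vert_{\Dot{H}^{2}}$, while $m\leq 1$ gives $\Vert Iu_{0}\Vert_{L^{2}}\leq\Vert u_{0}\Vert_{L^{2}}$, we obtain $\Vert Iu_{0}\Vert_{H^{2}}\lesssim 1+\Vert u_{0}\Vert_{L^{2}}$, a quantity controlled purely by the mass. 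The usual $X^{s,b}$ linear and Duhamel (energy) estimates, valid for $b=\frac{1}{2}+$, then yield
\begin{equation*}
    \Vert Iu\Vert_{X^{2,\frac{1}{2}+}_{\delta}}\lesssim\Vert Iu_{0}\Vert_{H^{2}}+\delta^{\theta}\Vert I(|u|^{4}u)\Vert_{X^{2,-\frac{1}{2}+}_{\delta}}
\end{equation*}
for some small $\theta>0$ arising from the time localization to $[0,\delta]$.

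The heart of the matter is the multilinear estimate $\Vert I(|u|^{4}u)\Vert_{X^{2,-\frac{1}{2}+}_{\delta}}\lesssim\Vert Iu\Vert_{X^{2,\frac{1}{2}+}_{\delta}}^{5}$, with implicit constant independent of $N$. I would establish this in two stages. The base case is the ordinary quintic $H^{2}$ estimate $\Vert|u|^{4}u\Vert_{X^{2,-\frac{1}{2}+}}\lesssim\Vert u\Vert_{X^{2,\frac{1}{2}+}}^{5}$ on $\mathbb{R}^{2}$: using the embedding $L^{2}_{t,x}\hookrightarrow X^{0,-\frac{1}{2}+}$ together with fractional Leibniz to place the two derivatives on the highest-frequency factor, each resulting factor is controlled by Hölder in space-time against the Strichartz bounds of Lemma \ref{X_s,b-str} for biharmonic-admissible exponents; because the quintic problem at $H^{2}$ in two dimensions sits far above the critical index $s_{c}=0$, the exponents close with room to spare. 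To insert the operator $I$ uniformly in $N$, I would invoke the interpolation lemma of \cite{colliander2002almost}, which transfers a product estimate for $I_{1}$ (equivalently the un-regularized estimate) to the same estimate for $I=I_{N}$; its hypotheses are satisfied because $m$ is smooth, radial, nonincreasing, and of Hörmander--Mikhlin type, as already noted in the text.

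Combining the two displays produces the closed inequality
\begin{equation*}
    \Vert Iu\Vert_{X^{2,\frac{1}{2}+}_{\delta}}\lesssim\bigl(1+\Vert u_{0}\Vert_{L^{2}}\bigr)+\delta^{\theta}\Vert Iu\Vert_{X^{2,\frac{1}{2}+}_{\delta}}^{5}.
\end{equation*}
A standard continuity/bootstrap argument then shows that if $\delta=\delta(\Vert u_{0}\Vert_{L^{2}})$ is chosen small enough that $\delta^{\theta}\bigl(1+\Vert u_{0}\Vert_{L^{2}}\bigr)^{4}\ll 1$, then $\Vert Iu\Vert_{X^{2,\frac{1}{2}+}_{\delta}}$ cannot leave the regime $\lesssim 1+\Vert u_{0}\Vert_{L^{2}}\lesssim 1$, which is the desired bound; the same argument, run as a contraction on a ball of this radius, simultaneously produces the solution on $[0,\delta]$. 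Crucially, since all constants above are independent of $N$, the time $\delta$ depends only on the mass $\Vert u_{0}\Vert_{L^{2}}$.

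I expect the main obstacle to be the multilinear estimate, and specifically its base case in dimension two. As the Remark preceding this lemma emphasizes, bilinear Strichartz estimates for comparable frequencies are unavailable when $d<4$, so one is forced to rely on the linear Strichartz estimates of Lemma \ref{X_s,b-str} together with Bernstein's inequality \eqref{eq:Berns_ineq}; the derivative distribution and the choice of admissible exponents must be organized carefully to avoid any loss, all while keeping the implicit constants independent of $N$ so that the final dependence of $\delta$ is on the mass alone.
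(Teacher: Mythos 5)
Your architecture coincides with the paper's: apply $I$ to Duhamel's formula \eqref{eq:duhamel}, use the linear and time-localization $X^{s,b}$ estimates \eqref{eq:typ_est_1}--\eqref{eq:typ_est_3}, bound the data by $\Vert Iu_{0}\Vert_{H^{2}}\lesssim (E(Iu_{0}))^{1/2}+\Vert u_{0}\Vert_{L^{2}}\lesssim 1+\Vert u_{0}\Vert_{L^{2}}$, reduce everything to the quintic estimate \eqref{eq:mult_lin_est}, and close with a continuity/bootstrap argument. The genuine gap is in how you obtain \eqref{eq:mult_lin_est} uniformly in $N$. You prove the unregularized estimate at regularity $2$ and then invoke the interpolation lemma, asserting that the required input is ``the estimate for $I_{1}$, equivalently the un-regularized estimate,'' and that its hypotheses amount to $m$ being smooth, radial and of H\"ormander--Mikhlin type. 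Both assertions fail. Since $m_{1}(\xi)=\vert\xi\vert^{s-2}$ for $\vert\xi\vert\geq 2$, the operator $I_{1}$ is essentially $\langle\nabla\rangle^{s-2}$, so the $I_{1}$-estimate in $X^{2,b}$-type spaces is the unregularized quintic estimate at regularity $s$, not your $H^{2}$ base case; the hypothesis of the lemma (which lives in \cite{Colliander2004}, not \cite{colliander2002almost}) is the one-parameter family of unregularized estimates at all regularities in $[s,2]$, whose hardest member is the bottom endpoint, and the H\"ormander--Mikhlin property of $m$ only yields $L^{p}$-boundedness of $I$, playing no role in the transference. That this is not bookkeeping can be seen from high-high-to-low interactions: take all five inputs at frequency $L\gg N$ with output at frequency $O(1)$. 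Then $I$ acts as the identity on the left of \eqref{eq:mult_lin_est}, while each factor on the right carries $m(L)\langle L\rangle^{2}\sim N^{2-s}L^{s}$ in place of $L^{2}$, so the claimed uniform bound is \emph{stronger} than the $H^{2}$ estimate by the factor $(N/L)^{5(2-s)}\ll 1$ and is precisely of regularity-$s$ strength. The $H^{2}$ estimate alone can therefore never imply \eqref{eq:mult_lin_est} with a constant independent of $N$.

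The repair is exactly the paper's route, and it is within your toolkit: prove the base case $\Vert \vert u\vert^{4}u\Vert_{X^{s,-\frac{1}{2}++}_{\delta}}\lesssim \Vert u\Vert^{5}_{X^{s,\frac{1}{2}+}_{\delta}}$ for $\frac{8}{7}<s<2$ (hence on the whole range $[s,2]$), which the paper does via the fractional Leibniz rule, duality as in \eqref{eq:duality}, the H\"older split $L^{6}\times L^{6}\times L^{6}\times L^{6}\times L^{6+}\times L^{6-}$, the $L^{6}_{t,x}$ and $L^{6+}_{t}L^{6-}_{x}$ Strichartz estimates of Lemma \ref{X_s,b-str}, and the interpolated bound $\Vert f\Vert_{L^{6-}_{t,x}}\lesssim\Vert f\Vert_{X^{0,\frac{1}{2}--}_{\delta}}$ for the dual factor. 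Your H\"older/Strichartz scheme closes just as comfortably at regularity $s$ as at $2$ --- in two dimensions even $(p,q)=(\infty,4)$ is biharmonic admissible, so there is ample room above $s_{c}=0$, and your worry about the unavailability of bilinear estimates is moot here since only linear Strichartz bounds are used --- but as written, the reduction of \eqref{eq:mult_lin_est} to the $H^{2}$ estimate via the interpolation lemma does not go through.
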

\begin{proof}
    We implement the standard iteration argument to prove the local existence of the modified equation
    \begin{equation}\label{eq:M4NLS}
        \begin{cases}   
            i\partial_{t}Iu-\triangle^{2}Iu=I(\vert u\vert^{4}u),\\
            Iu(0,x)=Iu_{0}(x).
        \end{cases}
    \end{equation}
    We require the following $X^{s,b}$ estimates, see e.g. \cite{tao2006nonlinear},
    \begin{align}
    \Vert e^{it\triangle^{2}}u\Vert_{X^{2,\frac{1}{2}+}_{\delta}}\lesssim&\,\Vert u\Vert_{H^{2}_{x}},\label{eq:typ_est_1}\\
    \Big\Vert\int_{0}^{t}e^{i(t-t')\triangle^{2}}U(x,t')\,\text{d}t'\Big\Vert_{X^{2,\frac{1}{2}+}}\lesssim&\,\Vert U\Vert_{X^{2,-\frac{1}{2}+}_{\delta}},\label{eq:typ_est_2}\\
    \Vert U\Vert_{X^{2,-b}_{\delta}}\lesssim&\,\delta^{b-b'}\Vert U\Vert_{X^{2,-b'}}\label{eq:typ_est_3}
    \end{align}
    where $0<b'<b<\frac{1}{2}$.  By Duhamel's formula, we write
\begin{equation}\label{eq:duhamel}
    Iu(t,x)=e^{it\triangle^{2}}Iu_{0}(x)+i\int_{0}^{t}e^{i(t-t')\triangle^{2}}I(\vert u\vert^{4}u)(t',x)\,\text{d}t'
\end{equation}
for $t\in[0,\delta]$. Using \eqref{eq:typ_est_1}--\eqref{eq:duhamel}, we see that
\begin{align*}
    \Vert Iu\Vert_{X^{2,\frac{1}{2}+}_{\delta}}\lesssim \Vert Iu_{0}\Vert_{H^{2}}+\delta^{0+}\Vert I(|u|^4u)\Vert_{X^{2,-\frac{1}{2}++}_{\delta}}.
\end{align*}
Note that
\begin{equation}\label{eq:H2<EI}
    \Vert Iu_{0}\Vert_{H^{2}_{x}}\lesssim (E(Iu_{0}))^{1/2}+\Vert u_{0}\Vert_{L^{2}_{x}}\lesssim 1+\Vert u_{0}\Vert_{L^{2}_{x}}.
\end{equation}
If we show that
\begin{equation}\label{eq:mult_lin_est}
    \Vert I(|u|^4u)\Vert_{X^{2,-\frac{1}{2}++}_{\delta}}\lesssim\Vert Iu\Vert^{5}_{X^{2,\frac{1}{2}+}_{\delta}},
\end{equation}
then we are done because combining \eqref{eq:H2<EI} with \eqref{eq:mult_lin_est} entails that
\begin{equation}\label{eq:mass_dpn_delta}
    \Vert Iu\Vert_{X_{\delta}^{2,\frac{1}{2}+}}\lesssim 1+\Vert u_{0}\Vert_{L^{2}_{t,x}}+\delta^{0+}\Vert Iu\Vert_{X_{\delta}^{2,\frac{1}{2}+}}^{5}
\end{equation}
and then running a bootstrap or continuity argument (see chapter $1$ of \cite{tao2006nonlinear}) to \eqref{eq:mass_dpn_delta}, we obtain the desired result. Note that the $\Vert u_{0}\Vert_{L^{2}_{x}}$ dependence of $\delta$ can be seen in \eqref{eq:mass_dpn_delta}. We also note that $\delta$ must be sufficiently small in order to apply the bootstrap argument. Using the interpolation lemma in \cite{Colliander2004}, it remains to demonstrate that
\begin{equation*}
     \Vert |u|^4u\Vert_{X^{s,-\frac{1}{2}++}_{\delta}}\lesssim\Vert u\Vert_{X^{s,\frac{1}{2}+}_{\delta}}^{5}
\end{equation*}
for $\frac{8}{7}<s<2$. By the fractional Leibniz rule, this boils down to showing 
\begin{equation*}
    \Vert (\langle\nabla\rangle^{s} u)|u|^4\Vert_{X^{0,-\frac{1}{2}++}_{\delta}}\lesssim\Vert u\Vert_{X^{s,\frac{1}{2}+}_{\delta}}^{5}
\end{equation*}
which is equivalent by duality to showing that 
\begin{equation}\label{eq:duality}
    \Bigg|\int_{0}^{\delta}\int_{\mathbb{R}^{2}}(\langle\nabla\rangle^{s}u)|u|^4f\,\text{d}x\text{d}t\Bigg|\lesssim \Vert u\Vert_{X^{s,\frac{1}{2}+}_{\delta}}^{5}.
\end{equation}
Applying Hölder's inequality, we have
\begin{equation*}
    \text{LHS of}\,\eqref{eq:duality} \leq\Vert\langle\nabla\rangle^{s}u\Vert_{L^{6}_{t,x}}\Vert u\Vert_{L^{6}_{t,x}}^{3}\Vert u\Vert_{L^{6+}_{t,x}}\Vert f\Vert_{L^{6-}_{t,x}}.
\end{equation*}
We then apply $L^{6}_{t,x}$-Strichartz estimate to the first two factors to get
\begin{equation*}
    \Vert\langle\nabla\rangle^{s}u\Vert_{L^{6}_{t,x}}\lesssim\Vert u\Vert_{X^{s,\frac{1}{2}+}_{\delta}}
\end{equation*}
and
\begin{equation*}
    \Vert u\Vert_{L^{6}_{t,x}}^{3}\lesssim\Vert u\Vert_{X^{0,\frac{1}{2}+}_{\delta}}^{3}\lesssim\Vert u\Vert_{X^{s,\frac{1}{2}+}_{\delta}}^{3}.
\end{equation*}
For the third factor, we first apply Sobolev embedding and then $L^{6+}_{t}L^{6-}_{x}$-Strichartz estimate to get
\begin{equation*}
    \Vert u\Vert_{L^{6+}_{t,x}}\lesssim\Vert\langle\nabla\rangle^{0+} u\Vert_{L^{6+}_{t}L^{6-}_{x}}\lesssim\Vert u\Vert_{X^{0+,\frac{1}{2}+}_{\delta}}\lesssim\Vert u\Vert_{X^{s,\frac{1}{2}+}_{\delta}}.
\end{equation*}
For the last factor, note that
\begin{equation*}
    \Vert f\Vert_{L^{2}_{t,x}}\lesssim\Vert f\Vert_{X^{0,0}_{\delta}}
\end{equation*}
and
\begin{equation*}
    \Vert f\Vert_{L^{6}_{t,x}}\lesssim\Vert f\Vert_{X^{0,\frac{1}{2}+}_{\delta}}.
\end{equation*}
Interpolating the estimates above, we obtain
\begin{equation*}
    \Vert f\Vert_{L^{6-}_{t,x}}\lesssim\Vert f\Vert_{X^{0,\frac{1}{2}--}_{\delta}}.
\end{equation*}
Combining all the estimates and taking the supremum of \eqref{eq:duality} over $\Vert f\Vert_{X^{0,\frac{1}{2}--}_{\delta}}=1$ completes the proof.
\end{proof}
\begin{proof}[Proof of Proposition \ref{enrgyincr_4}]
By the previous discussion, it is sufficient to establish \eqref{eq: main_T_1} and \eqref{eq:main_est_T2}. We start with $Term_{1}$. There are two frequency interaction cases. We omit the case $N\gg N_{2}$, as in this case the multiplier in the integral in \eqref{eq:L-P_piece_T_1} vanishes.

\noindent
\textbf{Case 1: }$N_{1}\sim N_{2}\gtrsim N\gg N_{3}$. \\ In this region, the multiplier is estimated by
\begin{equation*}
    \Big|1-\frac{m_{23456}}{m_{2}m_{3}m_{4}m_{5}m_{6}}\Big|\lesssim\frac{N_{3}}{N_{2}}.
\end{equation*} Using this bound and Hölder's inequality, we get
\begin{equation*}
    \text{LHS of \eqref{eq: main_T_1}}\lesssim\frac{N_{3}}{N_{2}}\Vert u_{N_{1}}\Vert_{L^{2}_{t}L^{\infty}_{x}}\Vert u_{N_{2}}\Vert_{L^{2}_{t}L^{\infty}_{x}}\prod_{j=3}^{6}\Vert u_{N_{j}}\Vert_{L^{\infty}_{t}L^{4}_{x}}.
\end{equation*}
   For high frequency components, from \eqref{keyestimate} with $\mu=1$, and for low frequency components, from \eqref{keyestimate} with $\mu=0$ and the Sobolev embedding, we obtain
\begin{align*}
    \Vert u_{N_{j}}\Vert_{L^{2}_{t}L^{\infty}_{x}}&\lesssim N_{1}^{-1}\Vert u_{N_{j}}\Vert_{X^{0,\frac{1}{2}+}_{\delta}},\quad j=1,2,\\
    \Vert u_{N_{j}}\Vert_{L^{\infty}_{t}L^{4}_{x}}&\lesssim\Vert u_{N_{j}}\Vert_{L^{\infty}_{t}\Dot{H}^{\frac{1}{2}}_{x}}\lesssim N_{j}^{1/2}\Vert u_{N_{j}}\Vert_{X^{0,\frac{1}{2}+}_{\delta}},\quad j=3,4,5,6.
\end{align*}
Collecting these estimates leads to
\begin{equation*}
    \text{LHS of \eqref{eq: main_T_1}}\lesssim\frac{N_{3}}{N_{2}}\frac{1}{N_{1}N_{2}}\Vert u_{N_{1}}\Vert_{X^{0,\frac{1}{2}+}_{\delta}}\Vert u_{N_{2}}\Vert_{X^{0,\frac{1}{2}+}_{\delta}}\prod_{j=3}^{6}N_{j}^{1/2}\Vert u_{N_{j}}\Vert_{X^{0,\frac{1}{2}+}_{\delta}}.
\end{equation*}
In order to establish the inequality \eqref{eq: main_T_1}, using \eqref{eq:Berns_ineq}, it is therefore enough to see that
\begin{equation*}
    \frac{N_{3}}{N_{2}}\frac{1}{N_{1}N_{2}}\frac{N_{1}^{2}}{N_{2}^{2}N_{3}^{2}N_{4}^{2}N_{5}^{2}N_{6}^{2}}N_{3}^{1/2}N_{4}^{1/2}N_{5}^{1/2}N_{6}^{1/2}\lesssim N^{-3+}N_{2}^{0-}
\end{equation*}
which is true in Case $1$.

\noindent
\textbf{Case 2: }$N_{2}\sim N_{3}\gtrsim N$ and $N_{1}\lesssim N_{2}$.\\ There are two sub-cases to consider:

\noindent
\textbf{Sub-case 2a.} $N_{1}\sim N_{2}\gg N_{3}\gtrsim N$. In this case, we use the following trivial bound for the multiplier
\begin{equation*}
    \Big|1-\frac{m_{23456}}{m_{2}m_{3}m_{4}m_{5}m_{6}}\Big|\lesssim\frac{m(N_{1})}{m(N_{2})m(N_{3})m(N_{4})m(N_{5})m(N_{6})}.
\end{equation*}
Again using Hölder's inequality, we get
\begin{align*}
    \text{Left side of \eqref{eq: main_T_1}}\lesssim\frac{m(N_{1})}{m(N_{2})m(N_{3})m(N_{4})m(N_{5})m(N_{6})}\Vert u_{N_{1}}\Vert_{L^{2}_{t}L^{\infty}_{x}}\Vert u_{N_{2}}\Vert_{L^{2}_{t}L^{\infty}_{x}}\prod_{j=3}^{6}\Vert u_{N_{j}}\Vert_{L^{\infty}_{t}L^{4}_{x}}.
\end{align*}
Applying \ref{keyestimate} in a similar vein as in Case 1 along with the Sobolev embedding, we obtain
\begin{align*}
    \Vert u_{N_{j}}\Vert_{L^{2}_{t}L^{\infty}_{x}}&\lesssim N_{j}^{-1}\Vert u_{N_{j}}\Vert_{X^{0,\frac{1}{2}+}_{\delta}},\quad j=1,2,\\
    \Vert u_{N_{j}}\Vert_{L^{\infty}_{t}L^{4}_{x}}&\lesssim\Vert u_{N_{1}}\Vert_{L^{\infty}_{t}\Dot{H}_{x}^{\frac{1}{2}}}\lesssim N_{j}^{1/2}\Vert u_{N_{j}}\Vert_{X^{0,\frac{1}{2}+}_{\delta}},\quad j=3,4,5,6.
\end{align*}
Therefore, we get
\begin{equation*}
    \text{Left side of \eqref{eq: main_T_1}}\lesssim\frac{m(N_{1})}{m(N_{2})m(N_{3})m(N_{4})m(N_{5})m(N_{6})}\frac{1}{N_{2}^{2}N_{3}^{3/2}N_{4}^{3/2}N_{5}^{3/2}N_{6}^{3/2}}\Vert u_{N_{1}}\Vert_{X^{-2,\frac{1}{2}+}_{\delta}}\prod_{j=2}^{6}\Vert u_{N_{j}}\Vert_{X^{2,\frac{1}{2}+}_{\delta}}.
\end{equation*}
Hence, it suffices to show
\begin{equation}\label{eq:eq_1_term_1}
    \frac{N_{2}^{-2+}N^{3-}}{m(N_{3})N_{3}^{3/2}m(N_{4})N_{4}^{3/2}m(N_{5})N_{5}^{3/2}m(N_{6})N_{6}^{3/2}}\lesssim 1.
\end{equation}
As $m(x)x^{3/2}$ is an increasing function for $x\geq 0$, we have $m(N_{3})N_{3}^{3/2}\gtrsim N^{3/2}$ and $m(N_{j})N_{j}^{3/2}\gtrsim 1$ for $j=4,5,6$. Plugging these bounds into the left side of \eqref{eq:eq_1_term_1} gives us
\begin{equation*}
   \text{LHS of \eqref{eq:eq_1_term_1}}\lesssim N_{2}^{-2+}N^{\frac{3}{2}-}\lesssim 1
\end{equation*}
since $N_{2}\gtrsim N$.

\noindent
\textbf{Sub-case 2b. }$N_{2}\sim N_{3}\gtrsim N$ and $N_{1}\lesssim N_{2}$.\\ We can bound the multiplier as
\begin{equation*}
    \Big|1-\frac{m_{23456}}{m_{2}m_{3}m_{4}m_{5}m_{6}}\Big|\lesssim\frac{m(N_{1})}{m(N_{2})m(N_{3})m(N_{4})m(N_{5})m(N_{6})}.
\end{equation*}
By Hölder's inequality, we obtain
\begin{align*}
    \text{Left side of \eqref{eq: main_T_1}}\lesssim \frac{m(N_{1})}{m(N_{2})m(N_{3})m(N_{4})m(N_{5})m(N_{6})}\Vert u_{N_{1}}\Vert_{L^{\infty}_{t}L^{2}_{x}}\Vert u_{N_{2}}\Vert_{L^{2}_{t}L^{\infty}_{x}}\Vert u_{N_{3}}\Vert_{L^{2}_{t}L^{\infty}_{x}}\prod_{j=4}^{6}\Vert u_{N_{j}}\Vert_{L^{\infty}_{t}L^{6}_{x}}.
\end{align*}
Utilizing Sobolev embedding, the estimate \eqref{keyestimate} with $\mu=1$ for $u_{N_{2}},u_{N_{3}}$ and with $\mu=0$ for the remaining factors, and finally using \eqref{eq:Berns_ineq}, we conclude that
\begin{equation*}
    \text{Left side of \eqref{eq: main_T_1}}\lesssim\frac{m(N_{1})N_{1}^{2}}{(m(N_{2}))^{2}N_{2}^{6}m(N_{4})N_{4}^{4/3}m(N_{5})N_{5}^{4/3}m(N_{6})N_{6}^{4/3}}\Vert u_{N_{1}}\Vert_{X^{-2,\frac{1}{2}+}_{\delta}}\prod_{j=2}^{6}\Vert u_{N_{j}}\Vert_{X^{2,\frac{1}{2}+}_{\delta}}.
\end{equation*}
Thus, it suffices to show
\begin{equation*}
    \frac{m(N_{1})N_{1}^{2}N^{3-}N_{2}^{0+}}{(m(N_{2}))^{2}N_{2}^{6}m(N_{4})N_{4}^{4/3}m(N_{5})N_{5}^{4/3}m(N_{6})N_{6}^{4/3}}\lesssim 1.
\end{equation*}
As $m(x)x^{p}$ is increasing for $x\geq 0$ and $p>6/7$, we have $m(N_{1})N_{1}^{2}\lesssim m(N_{2})N_{2}^{2}$, $m(N_{2})N_{2}^{4-}\gtrsim N^{4-}$, and $m(N_{j})N_{j}^{4/3}\gtrsim 1$ for $j=4,5,6$. Using these bounds, it is enough to establish
\begin{equation*}
    \frac{N^{3-}N_{2}^{0+}}{m(N_{2})N_{2}^{4-}N_{2}^{0+}}\lesssim N^{3-}N^{-4+}=N^{-1}\lesssim 1
\end{equation*}
since $N\gg 1$. This completes the assertion in \eqref{eq: main_T_1}.

It remains to prove that $Term_{2}\lesssim N^{-3+}$. We start by estimating the left side of \eqref{eq:main_est_T2}. Applying Hölder's inequality, we get
\begin{multline}\label{eq:term_2_holder}
    \text{LHS of \eqref{eq:main_est_T2}}\lesssim\frac{m(N_{12345})}{m(N_{6})m(N_{7})m(N_{8})m(N_{9})m(N_{10})}\Vert P_{N_{12345}}I(\vert u\vert^{4}u)\Vert_{L^{2}_{t,x}}\\\times\Vert Iu_{N_{6}}\Vert_{L^{6}_{t,x}}\Vert Iu_{N_{7}}\Vert_{L^{6}_{t,x}}\Vert Iu_{N_{8}}\Vert_{L^{6}_{t,x}}\Vert Iu_{N_{9}}\Vert_{L^{\infty}_{t,x}}\Vert Iu_{N_{10}}\Vert_{L^{\infty}_{t,x}}.
\end{multline}
We need the following lemma to proceed further.
\begin{lemma}\label{lemma_T2}
    Let $u,u_{N_{6}},\dots,u_{N_{10}}$ be given as in \eqref{eq:Term_2}. Then we have
    \begin{align}
        \Vert P_{N_{12345}}I(\vert u\vert^{4}u)\Vert_{L^{2}_{t,x}}&\lesssim\langle N_{12345}\rangle^{-2}\Vert Iu\Vert_{X^{2,\frac{1}{2}+}_{\delta}}^{5}\label{eq:eq1}\\
        \Vert Iu_{N_{j}}\Vert_{L^{6}_{t,x}}&\lesssim\langle N_{j}\rangle^{-2}\Vert Iu_{N_{j}}\Vert_{X^{2,\frac{1}{2}+}_{\delta}},\quad j=6,7,8,\label{eq:eq2}\\
        \Vert Iu_{N_{j}}\Vert_{L^{\infty}_{t,x}}&\lesssim\langle N_{j}\rangle^{-1}\Vert Iu_{N_{j}}\Vert_{X^{2,\frac{1}{2}+}_{\delta}},\quad j=9,10.\label{eq:eq3}
    \end{align}
\end{lemma}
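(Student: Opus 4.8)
The plan is to prove the three bounds separately, using that \eqref{eq:eq2} and \eqref{eq:eq3} (in their plain-$u$ form) are exactly the building blocks needed for \eqref{eq:eq1}. The one identity I will invoke repeatedly is this: if $g$ has spatial frequency localized in $\{|\xi|\sim N_j\}$, then directly from the definition \eqref{eq:X_sb} of the $X^{s,b}$-norm one has $\Vert g\Vert_{X^{0,\frac12+}_\delta}\sim\langle N_j\rangle^{-2}\Vert g\Vert_{X^{2,\frac12+}_\delta}$, i.e. the two derivatives are free to move between the weight $\langle N_j\rangle$ and the regularity index. For \eqref{eq:eq2}, I first note that the pair $(p,q)=(6,6)$ is biharmonic admissible, since it satisfies $\tfrac4q=2(\tfrac12-\tfrac1p)$ with $d=2$ (that is, $\mu=0$ in \eqref{bihadmissible}). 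Hence the key estimate \eqref{keyestimate} with $\mu=0$ gives $\Vert Iu_{N_j}\Vert_{L^6_{t,x}}\lesssim\Vert Iu_{N_j}\Vert_{X^{0,\frac12+}_\delta}$, and the localization identity above converts the right side into $\langle N_j\rangle^{-2}\Vert Iu_{N_j}\Vert_{X^{2,\frac12+}_\delta}$, which is \eqref{eq:eq2}.

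For \eqref{eq:eq3} the difficulty is that $L^\infty_{t,x}$ is not a Strichartz pair, so I would peel off space and time separately. First apply Bernstein's inequality in space, $\Vert Iu_{N_j}\Vert_{L^\infty_x}\lesssim N_j\Vert Iu_{N_j}\Vert_{L^2_x}$ (here $d=2$), and then use the embedding $X^{0,\frac12+}\hookrightarrow L^\infty_t L^2_x$, which follows from Cauchy--Schwarz in $\tau$ after splitting $\langle\tau+|\xi|^4\rangle^{\frac12+}\langle\tau+|\xi|^4\rangle^{-\frac12-}$, exactly as in the estimation of $\Vert\widehat{f_{\tau_0}}\Vert_{L^2}$ inside the proof of Lemma \ref{X_s,b-str}. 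Together these give $\Vert Iu_{N_j}\Vert_{L^\infty_{t,x}}\lesssim N_j\Vert Iu_{N_j}\Vert_{X^{0,\frac12+}_\delta}\sim\langle N_j\rangle^{-1}\Vert Iu_{N_j}\Vert_{X^{2,\frac12+}_\delta}$, which is \eqref{eq:eq3}.

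The substantive estimate is \eqref{eq:eq1}, and here I would argue directly rather than through the interpolation lemma. Littlewood--Paley decompose the five factors of $|u|^4u$, writing their dyadic sizes as $M_1\ge\cdots\ge M_5$; since the output frequency is $\sim N_{12345}$ and $m$ is nonincreasing and slowly varying, one bounds the symbol of $I$ pointwise by $m(N_{12345})$, so that $\Vert P_{N_{12345}}I(u_{M_1}\cdots u_{M_5})\Vert_{L^2_{t,x}}\lesssim m(N_{12345})\Vert u_{M_1}\cdots u_{M_5}\Vert_{L^2_{t,x}}$. Applying Hölder with three $L^6_{t,x}$ factors (on $M_1,M_2,M_3$) and two $L^\infty_{t,x}$ factors (on the two smallest frequencies $M_4,M_5$), and then the plain-$u$ forms of the bounds just established, yields $\Vert u_{M_1}\cdots u_{M_5}\Vert_{L^2_{t,x}}\lesssim M_4M_5\prod_{i=1}^5\Vert u_{M_i}\Vert_{X^{0,\frac12+}_\delta}$. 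Converting each factor by $\Vert u_{M_i}\Vert_{X^{0,\frac12+}_\delta}\sim(m(M_i)\langle M_i\rangle^2)^{-1}\Vert Iu_{M_i}\Vert_{X^{2,\frac12+}_\delta}$ and using $N_{12345}\lesssim M_1$ together with the monotonicity of $m(x)\langle x\rangle^2$ to write $m(N_{12345})\langle N_{12345}\rangle^2\lesssim m(M_1)\langle M_1\rangle^2$, the weight of the highest-frequency factor is spent paying for the target gain $\langle N_{12345}\rangle^{-2}$. What multiplies $\prod_i\Vert Iu_{M_i}\Vert_{X^{2,\frac12+}_\delta}$ is then $M_4M_5/\big(\prod_{i=2}^5 m(M_i)\langle M_i\rangle^2\big)$; since $m(x)x^2\gtrsim1$ and $m(x)x\gtrsim1$ for $x\ge1$ (valid because $s>1$), each of these factors is $\lesssim1$ and in fact carries a strictly negative power of $M_2,\dots,M_5$, so the dyadic sum converges to $\langle N_{12345}\rangle^{-2}\Vert Iu\Vert_{X^{2,\frac12+}_\delta}^5$.

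The hard part, and where I would spend the care, is the frequency bookkeeping in \eqref{eq:eq1}: one must check that the monotonicity of $m(x)\langle x\rangle^2$ genuinely transfers the two output derivatives onto $M_1$ even in the high-high-to-low interactions, where $N_{12345}\ll M_1\sim M_2$ and the $\langle N_{12345}\rangle^{-2}$ gain could appear to be lost; the saving grace is that the surviving $\langle M_2\rangle^{-2}$ decay then makes the sum over $M_1\sim M_2$ converge as well. The assumption $s>1$ (in particular $s>\tfrac87$) is exactly what guarantees $m(x)x\gtrsim1$, i.e. that the two $L^\infty_{t,x}$ factors can be absorbed with room to spare, so this is the single point where the precise shape of $m$ and the regularity threshold interact.
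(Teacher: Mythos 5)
Your proofs of \eqref{eq:eq2} and \eqref{eq:eq3} match the paper's in substance: \eqref{eq:eq2} is verbatim (the biharmonic admissible pair $(6,6)$ in \eqref{keyestimate} plus frequency localization), and your Bernstein-plus-embedding route to \eqref{eq:eq3}, via $X^{0,\frac12+}_\delta\hookrightarrow L^\infty_t L^2_x$, is the same computation the paper performs by Fourier inversion and Cauchy--Schwarz, only packaged differently. For \eqref{eq:eq1} you take a genuinely different route. The paper never decomposes the quintic product: it observes that $\langle\nabla\rangle^2 I$ is a pseudo-differential operator of positive order, invokes the fractional Leibniz rule to place $\langle\nabla\rangle^2 I$ on a single factor, and closes with one H\"older application $L^4_t L^\infty_x\times\big(L^{16}_t L^8_x\big)^4$, the Strichartz estimate \eqref{keyestimate} with $\mu=0$, and the Sobolev embedding $\Vert u\Vert_{L^{16}_t L^8_x}\lesssim\Vert\langle\nabla\rangle^{1/2}u\Vert_{L^{16}_t L^{8/3}_x}$; the gain $\langle N_{12345}\rangle^{-2}$ then comes for free from the projection $P_{N_{12345}}$. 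You instead run a full Littlewood--Paley decomposition of all five factors, pull the symbol out at $m(N_{12345})$, apply $\big(L^6_{t,x}\big)^3\times\big(L^\infty_{t,x}\big)^2$ H\"older with $L^\infty$ on the two smallest frequencies, and pay for $\langle N_{12345}\rangle^2$ using $N_{12345}\lesssim M_1$ and the monotonicity of $m(x)\langle x\rangle^2$. This works, and you correctly identify and resolve the only delicate point: the sum over $M_1$ carries no decay after the transfer, but frequency conservation forces either $M_1\sim N_{12345}$ (finitely many dyadic terms) or $M_1\sim M_2$, where the surviving $\langle M_2\rangle^{-2}$ makes the double sum converge. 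The trade-off: your argument is more self-contained --- it needs only \eqref{keyestimate}, Bernstein, and the monotonicity of $m(x)x^p$ already exploited throughout Section 4, and it avoids the fractional Leibniz rule for $\langle\nabla\rangle^2 I$, the one nonelementary input in the paper's proof --- at the cost of dyadic bookkeeping and the threshold $s>1$ (so that $m(x)x\gtrsim1$ absorbs the Bernstein factors $M_4M_5$), which is harmless since the lemma is only used for $s>\frac{8}{7}$, whereas the paper's route asks less of $m$ (increasing $m(x)x^{3/2}$ suffices for its Sobolev step).
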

\begin{proof}
     The estimate \eqref{eq:eq1} amounts to showing that
    \begin{equation*}
    \Vert\langle\nabla\rangle^{2}P_{N_{12345}}I(|u|^4u)\Vert_{L^{2}_{t,x}}\lesssim\Vert Iu\Vert_{X^{2,\frac{1}{2}+}_{\delta}}^{5}. 
\end{equation*}
The pseudo-differential operator $\langle\nabla\rangle^{2}I$ is of positive order $s>\frac{8}{7}$. So, it obeys the fractional Leibniz rule which implies that we may work on a typical term
\begin{equation*}
    \Vert P_{N_{12345}}(\langle\nabla\rangle^{2}Iu)|u|^4\Vert_{L^{2}_{t,x}}.
\end{equation*}
Applying Hölder's inequality, we get
\begin{equation*}
    \Vert P_{N_{12345}}(\langle\nabla\rangle^{2}Iu)|u|^4\Vert_{L^{2}_{t,x}}\lesssim\Vert(\langle\nabla\rangle^{2} Iu)|u|^4\Vert_{L^{2}_{t,x}}\leq\Vert\langle\nabla\rangle^{2} Iu\Vert_{L^{4}_{t}L^{\infty}_{x}}\Vert u\Vert_{L^{16}_{t}L^{8}_{x}}^{4}.
\end{equation*}
We shall utilize the estimate \eqref{keyestimate} with $\mu=0$ for all factors above to get
\begin{equation*}
    \Vert\langle\nabla\rangle^{2} Iu\Vert_{L^{4}_{t}L^{\infty}_{x}}\lesssim\Vert Iu\Vert_{X^{s,\frac{1}{2}+}_{\delta}}\lesssim\Vert Iu\Vert_{X^{2,\frac{1}{2}+}_{\delta}},
\end{equation*}
and also by Sobolev embedding, 
\begin{equation*}
    \Vert u\Vert_{L^{16}_{t}L^{8}_{x}}\lesssim\Vert\langle\nabla\rangle^{1/2} u\Vert_{L^{16}_{t}L^{\frac{8}{3}}_{x}}\lesssim\Vert \langle\nabla\rangle^{1/2}u\Vert_{X^{0,\frac{1}{2}+}_{\delta}}\lesssim\Vert Iu\Vert_{X^{2,\frac{1}{2}+}_{\delta}}
\end{equation*}
which completes the proof of \eqref{eq:eq1}. The inequality \eqref{eq:eq2} follows directly if we apply estimate \ref{keyestimate} to the biharmonic admissible pair $(6,6)$ 
\begin{equation*}
    \Vert Iu_{N_{j}}\Vert_{L^{6}_{t,x}}\lesssim\Vert Iu_{N_{j}}\Vert_{X^{0,\frac{1}{2}+}_{\delta}}\lesssim\langle N_{j}\rangle^{-2}\Vert Iu_{N_{j}}\Vert_{X^{2,\frac{1}{2}+}_{\delta}},\quad j=6,7,8.
\end{equation*}
Lastly, \eqref{eq:eq3} is established by the Fourier inversion formula and Cauchy-Schwarz inequality
\begin{align*}
    \vert Iu(x)\vert\leq\int_{\vert\xi\vert\sim N_{j}}\langle\xi\rangle^{2}\langle\xi\rangle^{-2}\vert\widehat{Iu_{N_{j}}}(\xi)\vert \,\text{d}\xi&\lesssim\langle N_{j}\rangle^{-1}\Bigg(\int_{\vert\xi\vert\sim N_{j}}\langle\xi\rangle^{4}\vert\widehat{Iu_{N_{j}}}(\xi)\vert^{2} \text{d}\xi\Bigg)^{1/2}\\
    &=\langle N_{j}\rangle^{-1}\Vert Iu_{N_{j}}(t)\Vert_{H^{2}_{x}},\quad j=9,10.
\end{align*}
This implies that
\begin{equation*}
    \Vert Iu_{N_{j}}\Vert_{L^{\infty}_{t,x}}\lesssim\langle N_{j}\rangle^{-1}\Vert Iu_{N_{j}}\Vert_{L^{\infty}_{t}H^{2}_{x}}\lesssim\langle N_{j}\rangle^{-1}\Vert Iu_{N_{j}}\Vert_{X^{2,\frac{1}{2}+}_{\delta}},\quad j=9,10.
\end{equation*}
\end{proof}
\noindent Applying Lemma \ref{lemma_T2}, we obtain
\begin{multline*}
    \text{RHS of \eqref{eq:term_2_holder}}\lesssim\frac{m(N_{12345})}{m(N_{6})m(N_{7})m(N_{8})m(N_{9})m(N_{10})}\\\times\frac{1}{\langle N_{12345}\rangle^{2}\langle N_{6}\rangle^{2}\langle N_{7}\rangle^{2}\langle N_{8}\rangle\langle N_{9}\rangle\langle N_{10}\rangle}\Vert Iu\Vert_{X^{2,\frac{1}{2}+}_{\delta}}^{5}\prod_{j=6}^{10}\Vert Iu_{N_{j}}\Vert_{X^{2,\frac{1}{2}+}_{\delta}}.
\end{multline*}
Thus, it suffices to show
\begin{equation}\label{eq:last_dyad_est_T2}
    \frac{m(N_{12345})}{m(N_{6})m(N_{7})m(N_{8})m(N_{9})m(N_{10})}\frac{N^{3-}N_{6}^{0+}}{\langle N_{12345}\rangle^{2}\langle N_{6}\rangle^{2}\langle N_{7}\rangle^{2}\langle N_{8}\rangle^{2}\langle N_{9}\rangle\langle N_{10}\rangle}\lesssim 1.
\end{equation}
There are two frequency interaction cases to consider

\noindent
\textbf{Case 1: }$N_{12345}\sim N_{6}\gtrsim N\gg N_{7}$ 
\\In this case,
\begin{equation*}
    \text{LHS of \eqref{eq:last_dyad_est_T2}}\lesssim N^{-3+}N_{6}^{-4+}\lesssim 1
\end{equation*}
since the function $m(x)x$ is increasing for $x\geq 0$ when $s\geq 1$.

\noindent
\textbf{Case 2: }$N_{6}\geq N_{7}\gtrsim N$ and $N_{12345}\lesssim N_{6}$. \\ There are two sub-cases to handle:

\noindent
\textbf{Sub-case 2a. }$N_{6}\sim N_{7}\gtrsim N$ and $N_{12345}\lesssim N_{6}$.

\noindent
In this case
\begin{equation*}
    \text{LHS of \eqref{eq:last_dyad_est_T2}}\lesssim\frac{N^{3-}N_{6}^{0+}m(N_{12345})}{\langle N_{12345}\rangle^{2}(m(N_{6}))^{2}N^{4-}_{6}N_{6}^{0+}}\lesssim N^{3-}N^{-4+}\lesssim 1
\end{equation*}
since for $s>8/7$ the function $m(x)x^{p}$ is increasing when $p>6/7$, and  $m(N_{12345})\langle N_{12345}\rangle^{-2}\lesssim 1$.

\noindent
\textbf{Sub-case 2b. }$N_{12345}\sim N_{6}\gg N_{7}\gtrsim N$.\\ Under the conditions, we have
\begin{equation*}
    \text{LHS \eqref{eq:last_dyad_est_T2}}\lesssim\frac{N^{3-}N_{6}^{0+}}{N_{6}^{4}m(N_{7})N_{7}^{2}m(N_{8})N_{8}^{2}m(N_{9})N_{9}^{2}m(N_{10})N_{10}^{2}}\lesssim N^{3-}N_{6}^{-4+}\lesssim 1
\end{equation*}
as $m(x)x^{2}$ is increasing, i.e., $m(N_{j})N_{j}^{2}\gtrsim 1$ for $j=7,8,9,10$. Thus, we conclude that $Term_{2}\lesssim N^{-3+}$ which completes the proof.
\end{proof}
\noindent \textbf{Remark. }Without using the refined version of Strichartz estimate in Lemma \ref{X_s,b-str}, i.e., if we are not able to take $\mu>0$, then we would only control the growth of the modified energy with $N^{-1+}$. Note that the left side of \eqref{eq: main_T_1} can be bounded by $N^{-\frac{7}{2}+}$ and $N^{-4+}$ in the Sub-case $2a$ and the Sub-case $2b$, respectively. Nevertheless in the Case $1$, the bound $N^{-3+}$ cannot be improved to a smaller exponent of $N$. In addition, notice that $Term_{2}$ may also be better controlled with $N^{-4+}$ in all cases.

\nocite{*}
\bibliographystyle{abbrv}
\bibliography{reference.bib}

\begin{thebibliography}{10}

\bibitem{Basakoglu2021}
E.~Ba{\c{s}}ako{\u{g}}lu.
\newblock Regularity properties of the cubic biharmonic {S}chrödinger equation
  on the half line.
\newblock {\em Partial Differential Equations and Applications}, 2(4):52, Jul
  2021.

\bibitem{colliander2002almost}
J.~Colliander, M.~Keel, G.~Staffilani, H.~Takaoka, and T.~Tao.
\newblock Almost conservation laws and global rough solutions to a nonlinear
  {S}chr{\"o}dinger equation.
\newblock {\em Mathematical Research Letters}, 9(5-6):659--682, 2002.
\newblock Copyright: Copyright 2018 Elsevier B.V., All rights reserved.

\bibitem{Colliander2004}
J.~Colliander, M.~Keel, G.~Staffilani, H.~Takaoka, and T.~Tao.
\newblock Multilinear estimates for periodic {KdV} equations, and applications.
\newblock {\em Journal of Functional Analysis}, 211(1):173--218, Jun 2004.

\bibitem{colliander2008global}
J.~Colliander, M.~Keel, G.~Staffilani, H.~Takaoka, and T.~Tao.
\newblock Global well-posedness and scattering for the energy-critical
  nonlinear {S}chrödinger equation in $\mathbb{R}^{3}$.
\newblock {\em Annals of Mathematics}, pages 767--865, 2008.

\bibitem{cui2007well}
S.~Cui and C.~Guo.
\newblock Well-posedness of higher-order nonlinear {Schrödinger equations in
  Sobolev} spaces ${H}^s(\mathbb{R}^{n})$ and applications.
\newblock {\em Nonlinear Analysis: Theory, Methods \& Applications},
  67(3):687--707, 2007.

\bibitem{davydova2001schrodinger}
T.~A. Davydova and Y.~A. Zaliznyak.
\newblock {S}chrödinger ordinary solitons and chirped solitons: fourth-order
  dispersive effects and cubic-quintic nonlinearity.
\newblock {\em Physica D: Nonlinear Phenomena}, 156(3-4):260--282, 2001.

\bibitem{dinh2017global}
V.~D. Dinh.
\newblock Global existence and scattering for a class of nonlinear fourth-order
  {S}chrödinger equation below the energy space.
\newblock {\em Nonlinear Analysis}, 172:115--140, 2018.

\bibitem{Dysthe}
K.~B. Dysthe.
\newblock Note on a modification to the nonlinear {S}chrodinger equation for
  application to deep water waves.
\newblock {\em Proceedings of the Royal Society of London. Series A,
  Mathematical and Physical Sciences}, 369(1736):105--114, 1979.

\bibitem{Guo2010}
C.~Guo.
\newblock Global existence of solutions for a fourth-order nonlinear
  {S}chrödinger equation in $n+1$ dimensions.
\newblock {\em Nonlinear Analysis: Theory, Methods {\&} Applications},
  73(2):555--563, Jul 2010.

\bibitem{karpman1996stabilization}
V.~Karpman.
\newblock Stabilization of soliton instabilities by higher-order dispersion:
  fourth-order nonlinear {S}chrödinger-type equations.
\newblock {\em Physical Review E}, 53(2):R1336, 1996.

\bibitem{karpman2000stability}
V.~Karpman and A.~Shagalov.
\newblock Stability of solitons described by nonlinear {S}chrödinger-type
  equations with higher-order dispersion.
\newblock {\em Physica D: Nonlinear Phenomena}, 144(1-2):194--210, 2000.

\bibitem{miao_2015}
C.~Miao, H.~Wu, and J.~Zhang.
\newblock Scattering theory below energy for the cubic fourth-order
  {S}chrödinger equation.
\newblock {\em Mathematische Nachrichten}, 288(7):798--823, 2015.

\bibitem{miao2011global}
C.~Miao, G.~Xu, and L.~Zhao.
\newblock Global well-posedness and scattering for the defocusing
  energy-critical nonlinear {S}chrödinger equations of fourth order in
  dimensions $d\geq 9$.
\newblock {\em Journal of Differential Equations}, 251(12):3381--3402, 2011.

\bibitem{Pausader2007GlobalWF}
B.~Pausader.
\newblock Global well-posedness for energy critical fourth-order {S}chrödinger
  equations in the radial case.
\newblock {\em Dynamics of Partial Differential Equations}, 4:197--225, 2007.

\bibitem{pausader2009cubic}
B.~Pausader.
\newblock The cubic fourth-order {S}chrödinger equation.
\newblock {\em Journal of Functional Analysis}, 256(8):2473--2517, 2009.

\bibitem{pausader_mass_crit_2010}
B.~Pausader and S.~Shao.
\newblock The mass-critical fourth-order {S}chrödinger equation in high
  dimensions.
\newblock {\em Journal of Hyperbolic Differential Equations}, 07(04):651--705,
  2010.

\bibitem{pausader2013scattering}
B.~Pausader and S.~Xia.
\newblock Scattering theory for the fourth-order {S}chrödinger equation in low
  dimensions.
\newblock {\em Nonlinearity}, 26(8):2175, 2013.

\bibitem{Quiroga-Teixeiro:97}
M.~Quiroga-Teixeiro and H.~Michinel.
\newblock Stable azimuthal stationary state in quintic nonlinear optical media.
\newblock {\em J. Opt. Soc. Am. B}, 14(8):2004--2009, Aug 1997.

\bibitem{segata}
J.-I. Segata.
\newblock Remark on well-posedness for the fourth order nonlinear
  {S}chrödinger type equation.
\newblock {\em Proceedings of the American Mathematical Society},
  132(12):3559--3568, 2004.

\bibitem{seong2021well}
K.~Seong.
\newblock Well-posedness and ill-posedness for the fourth order cubic nonlinear
  {S}chrödinger equation in negative {S}obolev spaces.
\newblock {\em Journal of Mathematical Analysis and Applications},
  504(1):125342, 2021.

\bibitem{Shamir1966}
E.~Shamir.
\newblock A remark on {M}ikhlin-{H}{\"o}rmander multipliers theorem.
\newblock {\em Journal of Mathematical Analysis and Applications},
  16(1):104--107, Oct 1966.

\bibitem{tao2006nonlinear}
T.~Tao.
\newblock {\em Nonlinear Dispersive Equations: Local and Global Analysis}.
\newblock Conference Board of the Mathematical Sciences regional conference
  series in mathematics. American Mathematical Society, 2006.

\bibitem{Turitsyn1985}
S.~K. Turitsyn.
\newblock Spatial dispersion of nonlinearity and stability of multidimensional
  solitons.
\newblock {\em Theoretical and Mathematical Physics}, 64(2):797--801, Aug 1985.

\bibitem{wang_2012}
Y.~Wang.
\newblock Global well-posedness for the generalised fourth-order {S}chrödinger
  equation.
\newblock {\em Bulletin of the Australian Mathematical Society},
  85(3):371–379, 2012.

\bibitem{turker2019}
T.~Özsarı and N.~Yolcu.
\newblock The initial-boundary value problem for the biharmonic {S}chrödinger
  equation on the half-line.
\newblock {\em Communications on Pure and Applied Analysis}, 18(6):3285--3316,
  2019.

\end{thebibliography}

\end{document}